


\documentclass[11pt,letterpaper]{amsart} 
\usepackage{lmodern}
\usepackage{palatino, euler, epic,eepic, amssymb, floatflt, microtype}
\usepackage{verbatim,color}
\usepackage{enumerate}
\usepackage[linktocpage,hidelinks]{hyperref}
\hypersetup{
    colorlinks,
    linkcolor={blue!50!black},
    citecolor={green!50!black},
    urlcolor={red!80!black}
}


\usepackage{tikz-cd}




\setlength{\oddsidemargin}{0cm} \setlength{\evensidemargin}{0cm}
\setlength{\marginparwidth}{0in}
\setlength{\marginparsep}{0in}
\setlength{\marginparpush}{0in}
\setlength{\topmargin}{0in}
\setlength{\headheight}{0pt}
\setlength{\headsep}{0pt}
\setlength{\footskip}{.3in}
\setlength{\textheight}{9.2in}
\setlength{\textwidth}{6.5in}
\setlength{\parskip}{4pt}

\linespread{1.15} 

 
 \newlength{\baseunit}               
 \newcount{\numlines}                
 \setlength{\baseunit}{0.05ex}
  



\newtheorem{theorem}{Theorem}[section]
\newtheorem{proposition}{Proposition}[section]
 
\newtheorem{conjecture}{Conjecture}[section]

\theoremstyle{definition}
\newtheorem{definition}{Definition} 
\newtheorem{lemma}{Lemma}[section]

\newcommand{\bbP}{\mathbb{P}}
\newcommand{\bbF}{\mathbb{F}}
\newcommand{\bbZ}{\mathbb{Z}}


\newcommand{\cA}{\mathcal{A}}
\newcommand{\cO}{\mathcal{O}}





\newcommand{\seq}{\subseteq}




\def\CC{\mathbb{C}}

\def\PP{\mathbb{P}}


\renewcommand{\H}{\mathcal{H}}

\newcommand{\cR}{\mathcal{R}}
\newcommand{\cS}{{\mathscr{S}}}








\newcommand{\propernormal}{%
  \mathrel{\ooalign{$\lneq$\cr\raise.22ex\hbox{$\lhd$}\cr}}}








\newcommand{\lremind}[1]{{\bf[label:  #1]}}
\newcommand{\notation}[1]{}
\renewcommand{\lremind}[1]{{}}

\newcommand{\cut}[1]{}

\newcommand{\rightarrowdbl}{\rightarrow\mathrel{\mkern-14mu}\rightarrow}





\begin{document}
\pagestyle{plain}
\title{The Geometric Syzygy Conjecture in Positive Characteristic}
\author{Michael Kemeny and Peter Yi Wei}
\address{Department of Mathematics, University of Wisconsin-Madison, 480 Lincoln Dr WI 53706}
\email{michael.kemeny@gmail.com}

\address{Department of Mathematics, University of Arkansas, 850 W Dickson Street, Fayetteville, AR 72701}
\email{peterwei.math@gmail.com}

\begin{abstract}We show the geometric syzygy conjecture in positive characteristic. Specifically, if $C$ is a general smooth curve of genus $g$ defined over an algebraically closed field $\mathbb{F}$ of characteristic $p$, then all linear syzygy spaces $\mathrm{K}_{i,1}(C, \omega_C)$, for any integer $i$ are spanned by syzygies of minimal rank $i+1$ provided $p\geq 2g-4$.
 \end{abstract}
\maketitle

\section{Introduction}

Our aim in this paper is to prove the Geometric Syzygy Conjecture for canonical curves of genus $g \geq 4$ over algebraically closed fields $\mathbb{F}$ of characteristic $p\geq 2g-4$. Suppose that $\phi \, : \,X \subseteq \PP^n_{\mathbb{F}}$, where $\mathbb{F}$ is an algebraically closed field of arbitrary characteristic. A huge amount of work has gone into studying the structure of the homogeneous ideal $I_{X/\PP^n}$ considered as a graded $S:=\mathrm{Sym}\,H^0(\mathcal{O}_{\PP^n}(1))$ module. In order to do this, one commonly assumes that $\phi$ is the embedding defined by a very ample line bundle $L$ on $X$ and furthermore, that $\phi$ is \emph{projectively normal}, i.e.\ that the homogeneous coordinate ring of $X$ can be identified with the global section ring
$$\Gamma_L(X):=\bigoplus_{q \in \mathbb{Z}} H^0(X,L^q),$$
considered as a graded $S$ module, and we have a surjection $S \twoheadrightarrow \Gamma_L(X)$. Assuming this, $I_{X/\PP^n}$ is the kernel of this surjection, and by Hilbert's syzygy theorem, we have a finite, minimal graded free resolution
$$0\to F_k \to \cdots \to F_2 \to F_1 \to I_{X/\PP^n} \to 0, \textrm{ of length }k\leq n.$$
The $F_i$ are all free, graded $S$ modules and so we can write them in terms of $\mathbb{F}$ vector spaces $\mathrm{K}_{i,j}(X,L)$, called the \emph{Koszul spaces} as
$$F_i= \bigoplus_{j \in \mathbb{Z}} \mathrm{K}_{i,j}(X,L) \otimes_{\mathbb{F}} S(-i-j).$$

A huge amount of work has gone into understanding this free resolution. In the case where $X$ is a smooth, canonical curve $C$ of genus $g$, we have in many cases a rather good understanding of the graded modules $F_i$. A conjecture regarding these graded modules was first made by Mark Green in 1984, known as Green's Conjecture \cite{green-koszul}. Whilst this is still in general open, in landmark papers by Voisin \cite{V1}, \cite{V2} it has been proven under the assumption that the canonical curve $C$ is a general smooth curve and $\mathbb{F}=\CC$. In characteristic $p$ (for a very near optimal bound), this result was proven in \cite{AFPRW}, and the bound subsequently improved \cite{rs}. The upshot is that, at in characteristic zero or when $\mathrm{char}(\mathbb{F}) \geq \frac{g-1}{2}$ then, assuming $C$ is general, we have a complete understanding of the graded modules $F_i$, \cite{rs}. However, we do not have a complete understanding of the \emph{maps} in the minimal free resolution of $I_{C/ \PP^{g-1}}$, as opposed to whether or not they vanish.

In this direction, Schreyer formulated the \emph{minimal syzygy conjecture}, which tells us that the modules $F_i$ are spanned by elements which arise as restrictions of scrolls in $\PP^{g-1}$ containing the curve (see \cite[chapter 3]{aprodu-nagel}). We can describe the minimal free resolutions of these scrolls giving rise to these elements explicitly using the Eagon--Northcott complex \cite{schreyer}. To make this precise, we first need the notion of \emph{rank} of a syzygy, which can be given in terms of the maps in the minimal free resolution, \cite[\S 2]{bothmer-JPAA}.

We give the simple definition of rank from \cite[chapter 3]{aprodu-nagel}. Assume a polarized variety $(X,L)$ is normally generated. Then we define the \emph{rank} of the syzygy $\alpha\neq 0 \in \mathrm{K}_{i,1}(X,L)$ to be the dimension of the smallest vector space $V\subseteq H^0(X,L)$ such that $\alpha \neq 0 \in \mathrm{K}_{i,1}(X,L,V)\subseteq \mathrm{K}_{i,1}(X,L)$. See \cite[\S 1]{kemeny-geometric-syzygy} for a discussion on these definitions. The rank of 
$\alpha \in \mathrm{K}_{i,1}(X,L)$ is always at least $i+1$. Those syzygies of the minimal rank are particularly simple. They all arise from a rational normal scroll, \cite[\S 3.2]{aprodu-nagel}. Moreover, syzygies of minimal rank can be explicitly constructed \cite[Prop.\ 1.9]{kemeny-geometric-syzygy}. To be explicit, if $\alpha \neq 0$ is a syzygy of minimal rank, there exists a line bundle $M$ on $X$ with $h^0(M)=2$ and $h^0(L \otimes M^{\vee})=i+1$ such that $\alpha$ is in the $1$-dimensional space
$$\bigwedge^{i+1}H^0(L \otimes M^{\vee}) \otimes \frac{H^0(M)}{\bbF\langle s \rangle} \subseteq \mathrm{K}_{i,1}(X,L)$$
where $s \neq 0 \in H^0(M)$, and the inclusion to $\mathrm{K}_{i,1}(X,L)$ is induced by multiplication by $s$.
Related to this, Schreyer has conjectured
\begin{conjecture}[Geometric Syzygy Conjecture]
    Let $C$ be a general curve of genus $g$. Then all linear syzygy spaces $\mathrm{K}_{i,1}(C,\omega_C)$ are spanned by syzygies of minimal rank $i+1$.
\end{conjecture}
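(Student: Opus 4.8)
The plan is to fix an integer $i$ for which $\mathrm{K}_{i,1}(C,\omega_C)\neq 0$ and translate the conjecture into a spanning statement for an explicit family of Brill--Noether syzygies. For a general curve the non-vanishing is confined to the range $1\le i\le g-2-\mathrm{Cliff}(C)=\lfloor (g-2)/2\rfloor$, and outside it the assertion is vacuous. First I would collect the available inputs: Green's conjecture in the relevant characteristic range (via \cite{AFPRW}, \cite{rs}) determines the Betti numbers, hence $\dim_{\F}\mathrm{K}_{i,1}(C,\omega_C)$, for a general $C$; and Brill--Noether theory, which holds in positive characteristic, guarantees that the variety $W^1_{g-i}(C)$ of pencils of degree $g-i$ is non-empty of the expected dimension precisely on this range. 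By the explicit description recalled above, each $M\in W^1_{g-i}(C)$ with $h^0(M)=2$ and $h^0(\omega_C\otimes M^\vee)=i+1$ contributes a line of minimal-rank syzygies $\Sigma_M=\bigwedge^{i+1}H^0(\omega_C\otimes M^\vee)\otimes\bigl(H^0(M)/\F\langle s\rangle\bigr)\subseteq \mathrm{K}_{i,1}(C,\omega_C)$. Writing $\mathrm{K}^{\min}_{i,1}$ for the span of all the $\Sigma_M$, the conjecture is exactly the equality $\mathrm{K}^{\min}_{i,1}=\mathrm{K}_{i,1}(C,\omega_C)$.

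The second step is to make the spanning amenable to degeneration. I would introduce the deficiency $\delta(C)=\dim_{\F}\mathrm{K}_{i,1}(C,\omega_C)-\dim_{\F}\mathrm{K}^{\min}_{i,1}$ and observe that it is upper semicontinuous in families: $\dim \mathrm{K}_{i,1}$ is upper semicontinuous, while the pencils $M$, and hence the subspaces $\Sigma_M$ and their span, specialize, so $\dim \mathrm{K}^{\min}_{i,1}$ is lower semicontinuous. It therefore suffices to exhibit a single curve $C_0$ in characteristic $p\ge 2g-4$ with $\delta(C_0)=0$, chosen so that $\dim_{\F}\mathrm{K}_{i,1}(C_0,\omega_{C_0})$ realizes the generic value (otherwise a larger Koszul space would be harder, not easier, to span). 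Equivalently, and this is the form I would actually work with, I would dualize: by Green's duality for the Gorenstein canonical resolution one has $\mathrm{K}_{i,1}^\vee\cong \mathrm{K}_{g-2-i,2}$, so a functional vanishing on every $\Sigma_M$ is a nonzero ``co-syzygy'' in $\mathrm{K}_{g-2-i,2}(C,\omega_C)$ orthogonal to all scrollar syzygies, and spanning is equivalent to the non-existence of such an element.

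For the model I would use a degeneration whose pencils are transparent --- a chain of rational components or a curve carrying a distinguished degree-$(g-i)$ covering of $\PP^1$ --- so that $W^1_{g-i}(C_0)$ contains an explicit, ideally finite, set of pencils whose scrolls and associated Eagon--Northcott differentials can be written down by hand (following \cite{schreyer}, \cite{aprodu-nagel}, \cite{bothmer-JPAA}). On such a curve the equality $\mathrm{K}^{\min}_{i,1}=\mathrm{K}_{i,1}$ reduces to checking that an explicit matrix of multilinear expressions in the sections of $\omega_{C_0}$ and the $\omega_{C_0}\otimes M^\vee$ has full rank over $\F$; the wedge products $\bigwedge^{i+1}H^0(\omega_{C_0}\otimes M^\vee)$, as $M$ varies, supply the relevant Pl\"ucker-type coordinates, and spanning becomes the non-degeneracy of the induced map from a compactification of $W^1_{g-i}(C_0)$ into $\PP(\mathrm{K}_{i,1})$.

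The hard part, and the source of the hypothesis $p\ge 2g-4$, will be controlling this rank computation in positive characteristic. In characteristic zero (as in \cite{kemeny-geometric-syzygy}, building on \cite{V1}, \cite{V2}) one may invoke irreducibility and monodromy of $W^1_{g-i}$ together with a first-order tangent-space computation to conclude that the scrollar syzygies cannot all lie in a hyperplane; in characteristic $p$ the same argument forces one to verify that the pivotal determinants --- essentially binomial coefficients and discriminant-type quantities arising from the Eagon--Northcott differentials and from the divided-power structure underlying the Koszul complex --- do not vanish modulo $p$. The bound $p\ge 2g-4$ is what should guarantee that the degrees entering these differentials stay below $p$, so that no unexpected characteristic-$p$ relation collapses the span and the generic value of $\dim \mathrm{K}_{i,1}$ is attained on the model. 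Establishing this non-vanishing, and simultaneously certifying that $C_0$ carries the generic Koszul dimension so that the semicontinuity argument closes, is where essentially all of the work lies.
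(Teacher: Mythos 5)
Your proposal is a programme rather than a proof: the two steps that would constitute the actual argument --- exhibiting the degenerate model $C_0$ and verifying that its scrollar syzygies span --- are exactly the ones you defer (``where essentially all of the work lies''), so nothing is established. Beyond that, the semicontinuity step has a genuine flaw. You claim $\dim \mathrm{K}^{\min}_{i,1}$ is lower semicontinuous because ``the pencils $M$, and hence the subspaces $\Sigma_M$, specialize''; but to transport spanning from the \emph{special} fibre $C_0$ to the \emph{generic} fibre you need the opposite direction: every pencil on $C_0$ that you use must be a limit of pencils on nearby fibres. On a special curve $W^1_{g-i}$ can acquire excess components that do not deform, and the corresponding $\Sigma_M$ contribute to the span on $C_0$ but to nothing nearby. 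The paper addresses exactly this point in Proposition \ref{prop-geometric-nodal}, proving that $\mathcal{W}^1_{k+1}(\mathcal{C})\to S$ is finite and \emph{flat} near the chosen fibre by matching its cardinality $\frac{1}{k+1}\binom{2k}{k}$ against the count on a smooth deformation; your sketch contains no substitute for this.

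The paper's actual route is also structurally different, and the difference matters for feasibility. Instead of a direct rank computation of Eagon--Northcott matrices on an explicit model (never carried out for general $g$ even over $\CC$), it reduces to the \emph{last} syzygy space of an even-genus curve by induction on the number of nodes: identifying general pairs of points raises the genus, Proposition \ref{spanned-im-gamma} shows the images of the multiplication maps $\gamma_{x,y}$ span $\mathrm{K}_{i,1}(C,\omega_C)$, and Proposition \ref{prop-lin-combo-min-rk} shows that Aprodu-type projection takes minimal-rank syzygies to combinations of minimal-rank syzygies. The base case is then geometric, not computational: the Brill--Noether map $\Delta_{C,k-1}:W^1_{k+1}(C)\to\PP(\mathrm{K}_{k-1,1}(C,\omega_C))$ has injective pullback on sections for a suitable integral \emph{nodal rational} curve, produced by degenerating hyperplane sections of K3 surfaces and exploiting the irreducibility of $\mathrm{Mor}_{2g-2}(\PP^1,\PP^{g-1})$. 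Finally, the bound $p\ge 2g-4$ does not arise from binomial coefficients in Eagon--Northcott differentials as you guess; it is the worst case $i=1$ of the condition $p>2g-2i-2=2\ell$ required for the genus-$2\ell$ base case (reducedness and the cardinality of $W^1_{\ell+1}$, and exactness of the exterior-power sequences for $i\le p-1$).
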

    Over $\mathbb{C}$, this conjecture was proven for $g \leq 8$ \cite{bothmer-Transactions} and for general genus \cite{kemeny-geometric-syzygy}. In positive characteristic $p>g$, the conjecture was proven in the important special case $g=2k$ and the last syzygy space $\mathrm{K}_{k-1,1}(C, \omega_C)$ (\cite{yi-wei}). The aim of this paper is to prove it in full over arbitrary fields $\mathbb{F}$ of characteristic at least $2g-4$.
    \begin{theorem}\label{thm-main}
        The Geometric Syzygy Conjecture holds for general curves of genus $g \geq 4$ defined over an algebraically closed field $\mathbb{F}$ of characteristic $p\geq 2g-4$.
    \end{theorem}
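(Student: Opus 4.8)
The plan is to reduce the theorem to the maximality of the rank of a single evaluation map and then to establish that rank by a degeneration carried out entirely in characteristic $p$. First I would record that, under the hypothesis $p\geq 2g-4$, the graded Betti numbers of a general canonical curve agree with their characteristic-zero values: this follows from the positive-characteristic form of Green's conjecture in \cite{AFPRW}, \cite{rs}, since $p\geq 2g-4\geq \frac{g-1}{2}$ for $g\geq 4$. In particular $\dim_{\mathbb{F}}\mathrm{K}_{i,1}(C,\omega_C)$ equals the generic value and is known, so writing $\mathrm{K}^{\mathrm{geom}}_{i,1}\subseteq \mathrm{K}_{i,1}(C,\omega_C)$ for the span of the minimal-rank syzygies, it suffices to prove $\dim_{\mathbb{F}}\mathrm{K}^{\mathrm{geom}}_{i,1}\geq \dim_{\mathbb{F}}\mathrm{K}_{i,1}(C,\omega_C)$.

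By the description recalled above, a minimal-rank syzygy in $\mathrm{K}_{i,1}$ comes from a line bundle $M$ with $h^0(M)=2$ and $h^0(\omega_C\otimes M^{\vee})=i+1$; by Riemann--Roch these are exactly the pencils of degree $g-i$, i.e.\ the points of $W^1_{g-i}(C)$, which for general $C$ has the expected dimension $\rho=g-2i-2\geq 0$ throughout the non-vanishing range $1\leq i\leq \lfloor g/2\rfloor-1$. I would therefore form the universal pencil over (a desingularization of) $W^1_{g-i}(C)$ and assemble the one-dimensional minimal-rank spaces $\bigwedge^{i+1}H^0(\omega_C\otimes M^{\vee})\otimes H^0(M)/\mathbb{F}\langle s\rangle$ into a coherent sheaf $\mathcal{E}$ together with a map $\mu:\mathcal{E}\to \mathrm{K}_{i,1}(C,\omega_C)\otimes \mathcal{O}_{W^1_{g-i}(C)}$. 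The conjecture becomes the statement that the images of the fibres of $\mu$, as $M$ varies, together span $\mathrm{K}_{i,1}(C,\omega_C)$. In the extremal case $g=2k$, $i=k-1$ one has $\rho=0$, the pencils are finite in number, and each contributes a single syzygy; this is precisely the situation settled in \cite{yi-wei}, which serves both as a base case and as a consistency check. The new difficulty in general is that $W^1_{g-i}(C)$ is positive-dimensional, so the minimal-rank syzygies are highly non-independent and one must control the rank of $\mu$ rather than merely count pencils.

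To prove that $\mu$ has maximal rank I would degenerate $C$, within characteristic $p$, to a carefully chosen curve $C_0$ that (i) still carries the generic Betti numbers and (ii) has a transparent supply of pencils whose minimal-rank syzygies can be written down explicitly through the Eagon--Northcott resolutions of the associated rational normal scrolls, following Schreyer's description in \cite{schreyer}, \cite{aprodu-nagel}. On such a $C_0$ one computes $\mathrm{K}^{\mathrm{geom}}_{i,1}$ directly and checks that it fills out $\mathrm{K}_{i,1}(C_0,\omega_{C_0})$. Since being full is an open condition and $\dim_{\mathbb{F}}\mathrm{K}_{i,1}$ is locally constant on the locus of curves with generic Betti numbers, spanning at the single point $[C_0]$ propagates to the general curve over $\mathbb{F}$. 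I would emphasize that a naive lift to characteristic zero does not suffice: the characteristic-zero geometric syzygy theorem \cite{kemeny-geometric-syzygy} yields surjectivity of the analogue of $\mu$ only at the generic point of a mixed-characteristic family, while the cokernel, a priori only torsion over the base, could jump at the special fibre; it is exactly this potential rank drop that must be excluded in characteristic $p$.

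The main obstacle is therefore the characteristic-$p$ control in the explicit computation on $C_0$, and this is where the hypothesis $p\geq 2g-4$ enters. The scroll and Eagon--Northcott analysis requires cohomological inputs that are automatic over $\mathbb{C}$ but can fail in small characteristic: projective normality of the canonical embedding, the vanishings governing the linear strand, and the surjectivity of the multiplication maps $H^0(M)\otimes H^0(\omega_C\otimes M^{\vee})\to H^0(\omega_C)$ attached to each pencil, which is what guarantees that the constructed elements genuinely lie in and span the linear syzygy space. The bound $p\geq 2g-4$ should be calibrated to keep precisely these maps surjective and the Frobenius-sensitive cohomology groups zero; verifying this, and confirming that $C_0$ can be chosen with generic Betti numbers in this characteristic range, is the technical heart of the argument.
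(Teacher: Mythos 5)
Your reduction of the conjecture to the surjectivity of an evaluation map $\mu$ over $W^1_{g-i}(C)$ is a correct restatement of the problem, but the two steps that would constitute an actual proof are both asserted rather than established, and each hides the real difficulty. First, you never say what the special curve $C_0$ is or how the ``direct computation'' of the span of the minimal-rank syzygies on it would go; that computation \emph{is} the theorem. The paper's route is quite specific: each pair $(g,i)$ is reduced to the \emph{last} syzygy space of an auxiliary even-genus nodal curve obtained by gluing $m=g-2i-2$ pairs of general points, with syzygies transported back via Aprodu-type projection maps (Lemma \ref{im-pr-nodal}, Propositions \ref{spanned-im-gamma}, \ref{prop-lin-combo-min-rk}, \ref{induction-step}); the resulting extremal case is handled by showing that the Brill--Noether map $\Delta_{C,k-1}\colon W^1_{k+1}(C)\to\PP(\mathrm{K}_{k-1,1}(C,\omega_C))$ has image spanning the target, which is in turn proved by degenerating to an \emph{integral rational nodal} canonical curve. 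Producing such a curve is the genuinely new obstacle in characteristic $p$ (nodal rational curves in the primitive class of a general K3 are not available there), and the paper's solution --- passing through the Chow variety and exploiting the irreducibility of $\mathrm{Mor}_{2g-2}(\PP^1,\PP^{g-1})$ to combine ``$\Delta^*$ injective'' with ``nodal'' --- has no counterpart in your outline.

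Second, your transfer step ``being full is an open condition'' is not available for free. The subspace $\mathrm{K}^{\mathrm{geom}}_{i,1}$ is the span of images of syzygies attached to pencils, and pencils on the special fibre need not deform to pencils on nearby fibres; to propagate spanning from $[C_0]$ to the generic point one needs the relative Brill--Noether scheme $\mathcal{W}^1_{k+1}(\mathcal{C})\to S$ to be finite and \emph{flat} near $[C_0]$, together with a relative version of $\Delta$. Establishing this --- via the cardinality count $\frac{1}{k+1}\binom{2k}{k}$, reducedness and smoothness of $W^1_{k+1}$, and the existence of a universal pencil after an \'etale base change --- occupies Propositions \ref{prop-geometric-nodal}--\ref{prop-nodal-global-injection} and cannot be replaced by semicontinuity of Betti numbers alone. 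Relatedly, your account of where $p\geq 2g-4$ enters is off target: in the paper the bound for fixed $i$ is $p>2g-2i-2$, i.e.\ $p$ exceeding the arithmetic genus $2\ell$ of the auxiliary nodal curve $D_m$, which is what is needed for the Brill--Noether count and reducedness of $W^1_{\ell+1}(D_m)$ and for the exterior-power exact sequences in the projection formalism ($i\leq p-1$), not for surjectivity of multiplication maps or Frobenius-sensitive vanishing.
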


    As in \cite{kemeny-geometric-syzygy}, the technique we use is to reduce to the case $g=2k$ and the last linear syzygy space. However, the major difficulty in imitating the method of \cite{kemeny-geometric-syzygy} is that we no longer can ensure that a general polarized K3 surface contains \emph{nodal}, rational curves in the linear system of a generator of $\mathrm{Pic}(X)$. We overcome this obstacle by observing that the space of integral rational curves of a fixed degree is irreducible, which ensures the existence of suitable nodal rational curves that need not lie on a K3 surface.

    In this paper, $\mathbb{F}$ denotes an algebraically closed field of characteristic $p$.\\

\textbf{Acknowledgements} MK was supported by NSF grant DMS-2100782.

\section{Preliminaries}
\subsection{Koszul cohomology}
Let $V$ be a finite-dimensional vector space over a field $\mathbb{F}$, and let $S_V:=\bigoplus_{d\geq 0}\mathrm{Sym}^d(V)$ denote the symmetric algebra of $V$. Let $M=\oplus_{j\geq 0}M_j$ be a graded $S_V$-module. 
\begin{definition}
    The Koszul cohomology $\mathrm{K}_{i,j}(M,V)$ is defined as the middle cohomology of the complex:
    $$\bigwedge^{i+1}V\otimes M_{j-1}\to\bigwedge^i V\otimes M_j \to\bigwedge^{i-1}V\otimes M_{j+1}.$$
\end{definition}

Koszul cohomology, introduced by Green, is a fundamental tool in the study of projective geometry. Let $X$ be a projective variety, $L$ a line bundle and $B$ a coherent sheaf on $X$. We define the graded $S_L:=S_{H^0(X,L)}$-module
$$\Gamma_X(B,L):=\bigoplus_{j\in\bbZ}H^0(X,L^{\otimes j}\otimes B).$$

We then define $\mathrm{K}_{i,j}(X,B,L):=\mathrm{K}_{i,j}(\Gamma_X(B,L),H^0(X,L))$. 

Given a subspace $V\subseteq H^0(X,L)$, we may regard $\Gamma_X(\cO_X,L)$ as a $S_W$-module and define $$\mathrm{K}_{i,j}(X,B,L;V):=\mathrm{K}_{i,j}(\Gamma_X(B,L),V).$$

When $B=\cO_X$, we write $\mathrm{K}_{i,j}(X,L):=\mathrm{K}_{i,j}(X,B,L)$, and similarly $\mathrm{K}_{i,j}(X,L;V)$.

\subsection{Kernel bundle description}
Let $L$ be a very ample line bundle on a variety $X$. Let $V\subseteq H^0(X,L)$ be a base-point free subspace of dimension $r$ that embeds $X\hookrightarrow\bbP^{r-1}$. The case when $V=H^0(X,L)$ corresponds to the complete linear series. The associated \textit{kernel bundle} $M_V$ is a vector bundle of rank $r-1$ defined by the following exact sequence
    $$0\to M_V\to V\otimes\cO_X\to L\to 0.$$ The we have the following three lemmas.
\begin{lemma}
    Suppose $L$ is a globally generated line bundle on $X$. $B$ is a coherent sheaf on $X$. Then
    $$\mathrm{K}_{i,j}(X,B,L)\simeq \mathrm{coker}\big(\wedge^{i+1}H^0(L)\otimes H^0(X,B\otimes L^{j-1})\to H^0(X,B\otimes \wedge^i M_L\otimes L^j)\big)$$
\end{lemma}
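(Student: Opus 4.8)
The plan is to deduce the formula directly from the kernel bundle sequence $0\to M_L\to H^0(L)\otimes\cO_X\to L\to 0$ by passing to exterior powers and matching the induced maps on cohomology with the Koszul differential. Write $W:=H^0(X,L)$ and $M:=M_L$. Since $L$ is globally generated, the evaluation $W\otimes\cO_X\to L$ is a surjection of locally free sheaves onto a line bundle, so $M$ is a vector bundle and the sequence is a short exact sequence of locally free sheaves, hence locally split.

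First I would record the exterior power sequences. Locally splitting the kernel bundle sequence and using that $L$ has rank one, for every $p$ one obtains a short exact sequence of vector bundles
$$0\to\wedge^p M\to\wedge^p W\otimes\cO_X\xrightarrow{\ \rho_p\ }\wedge^{p-1}M\otimes L\to 0.$$
Because these are locally split, I may tensor them by the coherent sheaf $B$ and by powers of $L$ without destroying exactness. Tensoring the $p=i$ sequence by $B\otimes L^j$ and the $p=i+1$ sequence by $B\otimes L^{j-1}$ and taking global sections produces two left-exact sequences of $\mathbb{F}$-vector spaces, whose middle maps I denote $\phi$ (out of $\wedge^i W\otimes H^0(B\otimes L^j)$) and $\psi$ (out of $\wedge^{i+1}W\otimes H^0(B\otimes L^{j-1})$, landing in $H^0(\wedge^i M\otimes B\otimes L^j)$).

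The technical heart is to identify $\phi$ and $\psi$ with the Koszul differential. Unwinding $\rho_p$, I would check that the composite $\wedge^p W\otimes\cO_X\xrightarrow{\rho_p}\wedge^{p-1}M\otimes L\hookrightarrow\wedge^{p-1}W\otimes L$, where the last arrow is induced by $M\hookrightarrow W\otimes\cO_X$, is the contraction $w_1\wedge\cdots\wedge w_p\mapsto\sum_k(-1)^{k-1}(w_1\wedge\cdots\widehat{w_k}\cdots\wedge w_p)\otimes\mathrm{ev}(w_k)$. After tensoring by the appropriate twist and applying $H^0$, this composite becomes exactly the Koszul differential, since $\mathrm{ev}(w_k)\cdot m=w_k\cdot m$ is just multiplication of sections in $\Gamma_X(B,L)$. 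As $\wedge^{p-1}M\otimes(-)\hookrightarrow\wedge^{p-1}W\otimes(-)$ is a subsheaf inclusion, passing to $H^0$ keeps it injective; therefore the kernel (resp.\ image) of the Koszul differential equals the kernel (resp.\ image) of $\phi$ (resp.\ $\psi$) under the identification of the $M$-valued cohomology with its image in $\wedge^\bullet W\otimes H^0(B\otimes L^\bullet)$.

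Concretely, the $p=i$ sequence identifies the cycle space with $\ker\phi=H^0(\wedge^i M\otimes B\otimes L^j)$, while the $p=i+1$ sequence identifies the boundary space $\mathrm{im}(\psi)\subseteq H^0(\wedge^i M\otimes B\otimes L^j)$ with the image of the Koszul differential entering $\wedge^i W\otimes H^0(B\otimes L^j)$; note $\mathrm{im}(\psi)\subseteq\ker\phi$ is automatic from $d^2=0$. Dividing one by the other yields
$$\mathrm{K}_{i,j}(X,B,L)=H^0(\wedge^i M\otimes B\otimes L^j)/\mathrm{im}(\psi)=\mathrm{coker}(\psi),$$
which is the asserted formula. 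The only step demanding real care is the compatibility claim of the previous paragraph — matching the sheaf-level maps $\rho_p$ with the module-level Koszul differential, signs included — since once that identification is established everything else is formal.
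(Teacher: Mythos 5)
Your argument is correct and is precisely the standard kernel-bundle computation: the paper states this lemma without proof (it is the classical description going back to Green, Lazarsfeld, and Ein--Lazarsfeld, cf.\ Aprodu--Nagel), and the proof it implicitly relies on is exactly yours — take exterior powers of the locally split sequence $0\to M_L\to H^0(L)\otimes\cO_X\to L\to 0$, twist by $B\otimes L^{j}$ and $B\otimes L^{j-1}$, and identify the resulting maps on global sections with the Koszul differential so that $\ker d=H^0(\wedge^i M_L\otimes B\otimes L^j)$ and $\operatorname{im} d=\operatorname{im}(\psi)$. The one cosmetic remark is that $\operatorname{im}(\psi)\subseteq\ker\phi$ needs no appeal to $d^2=0$, since $\psi$ by construction lands in $H^0(\wedge^i M_L\otimes B\otimes L^j)$, which \emph{is} $\ker\phi$.
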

\begin{lemma}[\cite{ein-lazarsfeld-asymptotic}]\label{lemma-kernel-bdl} Assume that $H^i(X,B\otimes\cO_X(mL))=0$ for all $i>0$ and $m\geq 0$. Then 
$$\mathrm{K}_{i,j}(X,B,L;V)=H^1(X,\wedge^{i+1}M_V\otimes B\otimes L^{\otimes j-1}),\textrm{ for all }j\geq 1.$$ 
In particular, $\mathrm{K}_{i,1}(X,L;V)=H^1(X,\wedge^{i+1}M_V)$ and $\mathrm{K}_{i,2}(X,L;V)=H^1(\wedge^{i+1}M_V\otimes L)$.
\end{lemma}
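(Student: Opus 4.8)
The plan is to reduce the computation of Koszul cohomology to cohomology on $X$ in two moves: first I would re-express $\mathrm{K}_{i,j}(X,B,L;V)$ as a cokernel of a map of global sections, exactly as in the preceding lemma but relative to $V$, and then I would identify that cokernel with $H^1(\wedge^{i+1}M_V\otimes B\otimes L^{j-1})$ using a single short exact sequence of exterior powers together with the vanishing hypothesis. The whole argument is powered by the observation that, because $L$ is a line bundle, wedging the defining sequence $0\to M_V\to V\otimes\cO_X\to L\to 0$ yields, for every $p\geq 1$, a short exact sequence
$$0\to \wedge^p M_V\to \wedge^p V\otimes\cO_X\to \wedge^{p-1}M_V\otimes L\to 0,$$
since $\wedge^{\geq 2}L=0$.

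For the first step I would run the proof of the previous lemma verbatim, with $V$ and $M_V$ in place of $H^0(L)$ and $M_L$. Tensoring the $p=i$ sequence by $B\otimes L^j$ and taking global sections, left-exactness of $H^0$ identifies $H^0(\wedge^i M_V\otimes B\otimes L^j)$ with the kernel of $\wedge^i V\otimes H^0(B\otimes L^j)\to H^0(\wedge^{i-1}M_V\otimes B\otimes L^{j+1})$; composing with the inclusion $H^0(\wedge^{i-1}M_V\otimes B\otimes L^{j+1})\hookrightarrow \wedge^{i-1}V\otimes H^0(B\otimes L^{j+1})$ coming from the $p=i-1$ sequence recovers the Koszul differential. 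Hence the kernel of the Koszul differential out of $\wedge^i V\otimes H^0(B\otimes L^j)$ is precisely $H^0(\wedge^i M_V\otimes B\otimes L^j)$, giving
$$\mathrm{K}_{i,j}(X,B,L;V)=\mathrm{coker}\big(\wedge^{i+1}V\otimes H^0(B\otimes L^{j-1})\to H^0(\wedge^i M_V\otimes B\otimes L^j)\big).$$

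For the second step I would take the $p=i+1$ sequence, tensor by $B\otimes L^{j-1}$, and pass to the long exact cohomology sequence, whose relevant segment is
$$\wedge^{i+1}V\otimes H^0(B\otimes L^{j-1})\to H^0(\wedge^i M_V\otimes B\otimes L^j)\to H^1(\wedge^{i+1}M_V\otimes B\otimes L^{j-1})\to \wedge^{i+1}V\otimes H^1(B\otimes L^{j-1}).$$
Since $j\geq 1$, the hypothesis forces $H^1(B\otimes L^{j-1})=0$, so the connecting map is surjective with kernel equal to the image of the first arrow. Comparing with the first step gives $\mathrm{K}_{i,j}(X,B,L;V)\cong H^1(\wedge^{i+1}M_V\otimes B\otimes L^{j-1})$, and the two displayed special cases follow by setting $B=\cO_X$ with $j=1,2$.

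The only genuinely non-formal point, and the one I would be most careful about, is the compatibility invoked at the junction of the two steps: one must check that the map induced on $H^0(-\otimes B\otimes L^{j-1})$ by the bundle surjection $\wedge^{i+1}V\otimes\cO_X\to \wedge^i M_V\otimes L$ coincides, under the identification of the target with the kernel of the Koszul differential, with the corestriction of the Koszul differential itself. This is what makes the two cokernels literally equal, and it is exactly the content of the cited computation. Everything else is formal: left-exactness of $H^0$, the two-step filtration of $\wedge^\bullet$ of a sequence with line-bundle quotient, and a single application of the vanishing $H^1(B\otimes L^m)=0$ for $m\geq 0$; note the higher vanishing $H^i(B\otimes L^m)=0$ for $i\geq 2$ in the hypothesis is not actually needed for this statement.
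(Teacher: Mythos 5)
Your argument is correct and is precisely the standard proof behind the citation to Ein--Lazarsfeld (the paper itself offers no proof, only the reference): the two-step filtration of $\wedge^{p}(V\otimes\cO_X)$ coming from the line-bundle quotient $L$, the identification of the kernel of the Koszul differential with $H^0(\wedge^{i}M_V\otimes B\otimes L^{j})$, and the connecting map in the long exact sequence for the $p=i+1$ wedge sequence, using only $H^1(B\otimes L^{j-1})=0$. Your side remarks are also accurate, including the compatibility of the bundle map with the Koszul differential and the observation that only the $H^1$ vanishing from the hypothesis is actually used.
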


\begin{lemma}[\cite{kemeny-geometric-syzygy}]\label{lemma-linear-syzygy}
    Let $(X,L)$ be a projectively normal polarized variety. For any subspace $V\subseteq H^0(X,L)$, the natural map $\mathrm{K}_{i,1}(X,L;V)\to \mathrm{K}_{i,1}(X,L)$ is injective for all $i\geq 0$.
\end{lemma}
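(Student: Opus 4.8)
The plan is to reduce the statement to a sequence of one-dimensional extensions and to control each such step with the long exact sequence in Koszul cohomology obtained by adjoining a single linear form. Concretely, I would choose a flag $V = V_0 \subset V_1 \subset \cdots \subset V_m = H^0(X,L)$ with $\dim_{\bbF} V_{t+1}/V_t = 1$, say $V_{t+1} = V_t \oplus \langle w_t \rangle$. Since the map $\mathrm{K}_{i,1}(X,L;V) \to \mathrm{K}_{i,1}(X,L)$ factors as the composite $\mathrm{K}_{i,1}(X,L;V_0) \to \mathrm{K}_{i,1}(X,L;V_1) \to \cdots \to \mathrm{K}_{i,1}(X,L;V_m)$, it suffices to prove injectivity for a single one-dimensional extension $W' = V \oplus \langle w \rangle$.

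For that, I set $M := \Gamma_X(\cO_X,L)$ and decompose $\bigwedge^\bullet W' = \bigwedge^\bullet V \oplus \big(w \wedge \bigwedge^{\bullet-1} V\big)$. The Koszul differential on $\bigwedge^\bullet W' \otimes M$ preserves the subcomplex $\bigwedge^\bullet V \otimes M$ (it contracts only against $V$), the induced differential on the quotient is, up to sign, the Koszul differential of $V$, and the part of the differential carrying the quotient back into the subcomplex is multiplication by $w$. This is exactly a mapping-cone presentation, and tracking the bidegrees gives the exact segment
$$\mathrm{K}_{i,0}(X,L;V) \xrightarrow{\,\cdot w\,} \mathrm{K}_{i,1}(X,L;V) \to \mathrm{K}_{i,1}(X,L;W').$$
Exactness at the middle term shows that the kernel of $\mathrm{K}_{i,1}(X,L;V) \to \mathrm{K}_{i,1}(X,L;W')$ is the image of multiplication by $w$ out of $\mathrm{K}_{i,0}(X,L;V)$; hence it suffices to prove $\mathrm{K}_{i,0}(X,L;V) = 0$ for every $i \geq 1$.

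Finally I would compute $\mathrm{K}_{i,0}(X,L;V)$ directly. It is the middle cohomology of $\bigwedge^{i+1} V \otimes M_{-1} \to \bigwedge^i V \otimes M_0 \to \bigwedge^{i-1} V \otimes M_1$. Since $X$ is an integral projective variety, $M_0 = H^0(\cO_X) = \bbF$ and $M_{-1} = H^0(L^{-1}) = 0$, so $\mathrm{K}_{i,0}(X,L;V)$ is the kernel of the Koszul map $\bigwedge^i V \to \bigwedge^{i-1} V \otimes H^0(L)$, which factors through $\bigwedge^i V \to \bigwedge^{i-1} V \otimes V$ because the module action carries $V \otimes M_0$ into $V \subseteq H^0(L)$. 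On a basis $\{e_j\}$ of $V$ this map sends $e_{s_1} \wedge \cdots \wedge e_{s_i} \mapsto \sum_k (-1)^{k-1} e_{s_1} \wedge \cdots \widehat{e_{s_k}} \cdots \wedge e_{s_i} \otimes e_{s_k}$, and distinct basis vectors of $\bigwedge^i V$ go to sums supported on disjoint sets of basis vectors of $\bigwedge^{i-1} V \otimes V$; hence it is injective for every $i \geq 1$ and in every characteristic. Thus $\mathrm{K}_{i,0}(X,L;V) = 0$ for $i \geq 1$, completing the argument.

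I expect the point requiring the most care to be the bookkeeping in the mapping-cone long exact sequence — getting the homological indices right and identifying the connecting map with multiplication by $w$ — together with the observation that the injectivity of $\bigwedge^i V \to \bigwedge^{i-1} V \otimes V$ is purely combinatorial and so insensitive to the characteristic, which is why no hypothesis $p \gg 0$ is needed at this step. I would also note that projective normality is not really used here beyond fixing the ambient polarized setting; only $M_0 = \bbF$ and $M_{-1} = 0$ enter. The argument is substantive for $i \geq 1$, which is the range relevant to syzygies; the degenerate case $i = 0$, where $\mathrm{K}_{0,1}(X,L;V) = H^0(L)/V$, is special and should be read in that light.
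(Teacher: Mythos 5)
Your argument is correct, and it is essentially the standard one: the paper itself gives no proof of this lemma (it is imported from \cite{kemeny-geometric-syzygy}), and the proof there likewise runs through the one-dimensional-extension long exact sequence $\mathrm{K}_{i,0}(X,L;V) \xrightarrow{\cdot w} \mathrm{K}_{i,1}(X,L;V) \to \mathrm{K}_{i,1}(X,L;V\oplus\langle w\rangle)$ together with the vanishing $\mathrm{K}_{i,0}(X,L;V)=0$. Your verification of that vanishing is the right one for arbitrary characteristic: the Koszul differential $\bigwedge^i V \to \bigwedge^{i-1}V\otimes V$ sends distinct basis monomials $e_S$ to sums supported on pairwise disjoint sets of basis vectors $e_{S\setminus\{k\}}\otimes e_k$ with coefficients $\pm 1$, so it is injective with no divisibility hypothesis (the tempting alternative, composing with the wedge map back to $\bigwedge^i V$, would only give multiplication by $i$ and hence would fail in small characteristic). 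You are also right that projective normality plays no role; only $M_{-1}=H^0(L^{-1})=0$ and $M_0=H^0(\mathcal{O}_X)=\mathbb{F}$ enter. The one point you should state outright rather than hedge: for $i=0$ and a proper subspace $V\subsetneq H^0(X,L)$ the statement as printed is simply false, since $\mathrm{K}_{0,1}(X,L;V)=H^0(L)/V\neq 0$ maps to $\mathrm{K}_{0,1}(X,L)=0$, consistent with your exact sequence because $\mathrm{K}_{0,0}(X,L;V)=M_0=\mathbb{F}$ does not vanish. So the lemma should be read with $i\geq 1$; this is a defect of the stated range, not of your proof, and the case $i=0$ is never used in the paper.
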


\subsection{Brill--Noether theory for curves in characteristic $p$}
Let $f \, : \, \mathcal{C} \to S$ be a flat, projective morphism of $\mathbb{F}$-varieties, whose geometric fibres are integral curves of the same arithmetic genus $p \in \mathbb{Z}_{>0}$. Assume further, that we have a section $\sigma \, :\, S \to \mathcal{C}$, landing in the smooth locus of $\mathcal{C}$.  We need to make the assumption to avoid issues with representability in the Zariski topology, cf.\ \cite{kass-survey}. It is vacuous \'{e}tale locally. As in \cite{altman-kleiman}, we have the compactified Jacobian $\overline{J}^d(\mathcal{C}) \to S$ of rank one, torsion free sheaves on the fibres of $f$ of degree $d$, which is a projective scheme over $S$. We have a universal sheaf $\mathcal{A}$ on $\mathcal{C} \times_S \overline{J}^d(\mathcal{C})$. The determinantal subscheme $$ W^r_d(\mathcal{C}):= \{ x \in \overline{J}^d(\mathcal{C}) \, | \, h^0([x]) \geq r+1 \} $$
is then constructed as in \cite[\S 4]{kempf-schubert} or \cite[\S 2.2]{bhosle-param}, where $[x] $  denotes the point parametrised by a torsion free sheaf on a fibre of $f$. In the special case where $f$ is smooth, $\overline{J}^d(\mathcal{C})=\mathrm{Pic}^d(\mathcal{C})$. In the special case $S=\mathrm{Spec}(\mathbb{F})$, and $\mathcal{C} \to S$ is a single integral curve $C$ of even genus $g=2k$. Then $W^1_{k+1}(C)$ is expected to be zero dimensional and smooth, and if so, we may compute its length \cite[Corollary to Theorem 3]{kempf-schubert}. By \cite{kempf-schubert}, in the case where $C$ is a smooth curve, then, if we assume $W^1_{k+1}(C)$ is zero dimensional and smooth, it has length $\frac{1}{k+1} \binom{2k}{k}$. If $C$ is only an integral curve lying on a surface, which may be deformed to a smooth curve (for instance, when $C$ lies on primitively polarized K3 surface, see \cite[Cor.\ 3.5]{huybrechts-k3}), then, if $W^1_{k+1}(C)$ is zero dimensional and smooth, we see by semicontinuty and by deforming to the smooth case that $W^1_{k+1}(C)$ consists of at least $\frac{1}{k+1} \binom{2k}{k}$ points.

Let $\pi \, : \, \mathcal{C} \to \mathcal{T}$ be a flat family of integral, Gorenstein curves of arithmetic genus $g$. We define $f \, : \mathbb{X}^1_{k+1}:= \mathrm{Hom}_{\pi}^{k+1}(\mathcal{C}, \PP^1_{\mathcal{T}})$ to be the Hom scheme, such that $f$ is flat, with fibre over a point $t \in \mathcal{T}$ consisting of all morphisms $h \, : \, \mathcal{C}_t:=\pi^{-1}(t) \to \PP_{\mathbb{F}}^1$ such that $\chi(h^*(\mathcal{O}_{\PP^1}(1))=k+2-g$. This may be constructed via the relative Hilbert scheme, \cite[Theorem 1.10]{kollar}. There is a natural $\mathrm{Aut}(\PP^1)$ action on $\mathbb{X}^1_{k+1}$ via automorphisms of the target. Notice that there is a natural morphism $\mathbb{X}^1_{k+1} \to W^1_{k+1}(\mathcal{C})$ (arising from the universal property for $\overline{J}^{k+1}(\mathcal{C})$), which is affine and $\mathrm{Aut}(\PP^1)$ invariant. So, as in \cite[Remark II.2.7]{kollar}, the GIT quotient $\mathcal{X}^1_{k+1}:=\mathbb{X}^1_{k+1} // \mathrm{Aut}(\PP^1)$ can be defined.

\section{Moduli Spaces of rational curves in $\PP^{g-1}$} \label{label-mod-spaces}
Let $\bbF=\overline{\bbF}$ be an algebraically closed field of characteristic $p$, let $g\geq 4$ be an integer, and consider $\mathrm{Mor}_{2g-2}(\PP^1_{\bbF},\PP_{\bbF}^{g-1}),$ the set of morphisms $f\, :\,  \PP^1 \to \PP^{g-1}$ with $\deg f^*\mathcal{O}_{\PP^{g-1}}(1)=2g-2$, which is naturally an open subscheme (parametrizing the projectivization of base point free two dimensional vector spaces) of $$\PP\big(H^0(\PP^1,\mathcal{O}_{\PP^1}(2g-2)^{\oplus g})\big).$$
Hence $\mathrm{Mor}_{2g-2}(\PP^1_{\bbF},\PP_{\bbF}^{g-1}),$ is an irreducible, quasi-projective variety. Notice that, if we pick $g$ pairs $x_1,y_1, \ldots, x_{g},y_{g}$ of general points on $\PP^1$, we obtain a nodal rational curve $D$ of arithmetic genus $g$ such that the natural map $\PP^1 \to D$ identifying $x_i$ to $y_i$, $1\leq i \leq g$, is the normalization and further the line bundle $\omega_D$ on $D$, of degree $2g-2$, is very ample and produces a closed embedding $D \hookrightarrow \PP^{g-1}$, \cite[Theorem 3.6]{CFHR}, \cite[Theorem F]{Catanese}. In particular, $M:=\mathrm{Mor}_{2g-2}(\PP^1_{\bbF},\PP_{\bbF}^{g-1})$ always contains points arising from the normalization of nodal rational canonical curves.

Let us recall the map $\delta_s$ from \cite[Lemma 2.2]{kemeny-geometric-syzygy}. Let $C$ be an integral curve and $[A]\in W^1_d(C) \setminus W^2_d(C)$, i.e.\ $A$ is a line bundle on $C$ with $h^0(A)=2$. Choose $s\neq 0 \in \mathrm{H}^0(A)$. For any fixed integer $i$ we have a map
$$\delta_s \, : \, \bigwedge^{i+1}\mathrm{H}^0(\omega_C \otimes A^{\vee}) \otimes \frac{\mathrm{H}^0(A)}{\bbF\langle s \rangle} \to \mathrm{K}_{i,1}(C,\omega_C; \mathrm{H}^0(\omega_C \otimes A^{\vee})).$$ A particular special case will be important for us. Let $C$ be an integral curve of arithmetic genus $g=2k$ and assume every torsion-free sheaf in $W^1_{k+1}(C)$ has precisely two sections. Assume further $C$ is Gorenstein with $\omega_C$ very ample and normally generated. Then we have a morphism $$\Delta_{C,k-1} \, : \, W^1_{k+1}(C) \to \PP(\mathrm{K}_{k-1,1}(C, \omega_C)),$$ \cite[Prop.\ 2.7]{kemeny-geometric-syzygy},\footnote{In the statement of the proposition we assume that $C$ can be embedded into a smooth surface and $\mathbb{F}=\mathbb{C}$, but neither assumption is used beyond needing $C$ to be Gorenstein} which induces the pull-back map on global sections
$$\Delta^*_{C,k-1} \; : \; \mathrm{H}^0(\mathcal{O}_{\PP}(1)) \to \mathrm{H}^0(\Delta^*\mathcal{O}_{\PP}(1)),$$
with $\PP:=\PP(\mathrm{K}_{k-1,1}(C,\omega_C))$. The relationship between this map and the geometric syzygy conjecture is that, if it holds that $\Delta^*_{C,k-1}$ is injective, then the image of $\Delta_{C,k-1}$ cannot lie in a hyperplane, and hence must span all of $\PP(\mathrm{K}_{k-1,1}(C, \omega_C))$. The condition that $\Delta^*_{C,k-1}$ being injective is algebro-geometric in nature.

\begin{lemma} \label{BN finite}
Let $C$ be a general projective curve of genus $g=2k \geq 4$ over an algebraically closed field $\mathbb{F}$ of characteristic $p > g=2k$. Then the Brill--Noether locus $\mathrm{W}^1_{k+1}(C)$ is nonempty, finite and $\mathrm{W}^2_{k+1}(C)=\varnothing$.
\end{lemma}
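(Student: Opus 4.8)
The plan is to deduce all three assertions from the numerics of the Brill--Noether number $\rho(g,r,d)=g-(r+1)(g-d+r)$ together with the validity of Brill--Noether theory over $\mathbb{F}$. First I would record the two relevant values. For $(g,r,d)=(2k,1,k+1)$ we have $g-d+r=2k-(k+1)+1=k$, so $\rho(2k,1,k+1)=2k-2\cdot k=0$; thus $\mathrm{W}^1_{k+1}(C)$ has expected dimension $0$. For $(g,r,d)=(2k,2,k+1)$ we have $g-d+r=2k-(k+1)+2=k+1$, so $\rho(2k,2,k+1)=2k-3(k+1)=-k-3<0$; thus $\mathrm{W}^2_{k+1}(C)$ has negative expected dimension and should be empty.

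For the nonemptiness of $\mathrm{W}^1_{k+1}(C)$ I would invoke the existence half of Brill--Noether theory, which holds for \emph{every} smooth projective curve over an arbitrary field: when $\rho(g,r,d)\geq 0$, the determinantal (Kempf--Kleiman--Laksov) description realizes $\mathrm{W}^r_d(C)$ as a degeneracy locus whose expected class is nonzero, forcing $\mathrm{W}^r_d(C)\neq\varnothing$ with every irreducible component of dimension at least $\rho$. As this is a Chern-class computation on the symmetric product $C^{(d)}$, it is insensitive to the characteristic. Applying it with $\rho(2k,1,k+1)=0$ gives $\mathrm{W}^1_{k+1}(C)\neq\varnothing$ and $\dim\mathrm{W}^1_{k+1}(C)\geq 0$ for every such curve.

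The remaining content is the \emph{upper} bound $\dim\mathrm{W}^r_d(C)\leq\rho(g,r,d)$ for a general curve $C$, with the convention that $\rho<0$ forces emptiness; this yields both the finiteness of $\mathrm{W}^1_{k+1}(C)$ (combined with the previous paragraph, $\dim=0$) and the vanishing $\mathrm{W}^2_{k+1}(C)=\varnothing$. Since the coarse moduli space $\mathcal{M}_g$ is irreducible over $\mathbb{F}$ in every characteristic, the word \emph{general} is unambiguous. To obtain the upper bound in characteristic $p$, I would not use the Eisenbud--Harris limit linear series nor the Griffiths--Harris degeneration, both of which are characteristic-zero in nature, but instead the tropical/specialization route: a chain of $g$ loops with generic integer edge lengths is Brill--Noether general by the computation of Cools--Draisma--Payne--Robeva, and Baker's specialization inequality $r_\Gamma(\tau(D))\geq r_C(D)$ propagates this bound to any curve over a discretely valued field whose skeleton is such a chain. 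All inputs here --- the loop-chain combinatorics and the specialization inequality --- are characteristic-free, so the bound holds over $\mathbb{F}$. Combining the two bounds proves the lemma; in particular, since $\mathrm{W}^2_{k+1}(C)=\varnothing$, every sheaf parametrized by $\mathrm{W}^1_{k+1}(C)$ has exactly two sections.

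I expect the upper bound in positive characteristic to be the only real obstacle, since existence and the numerology are formal. The hypothesis $p>g$ is not in fact needed for the Brill--Noether statements themselves --- the tropical argument is characteristic-independent --- and is inherited from the running assumptions of the surrounding results. The single point that requires care is the construction, in characteristic $p$, of a totally degenerate curve realizing the generic loop-chain as its dual graph so that Baker's inequality applies; this is routine via a regular semistable model, and it is here that one must confirm nothing characteristic-specific intervenes.
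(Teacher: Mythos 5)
Your argument is correct, but it is not the route the paper takes. The paper's proof is a two-line reduction to K3 geometry: a primitively polarized K3 surface of genus $g$ exists over $\mathbb{F}$, a general hyperplane section $C$ of it has $\mathrm{W}^1_{k+1}(C)$ nonempty and finite and $\mathrm{W}^2_{k+1}(C)=\varnothing$ by the cited results of \cite{yi-wei} (which is where the hypothesis $p>g$ actually enters), and the conclusion is then spread from that one curve to the general curve via the relative Brill--Noether construction of \S 2.3 together with openness/semicontinuity. You instead prove the statement from first principles: nonemptiness from the Kempf/Kleiman--Laksov existence theorem (correctly noted to be a characteristic-free degeneracy-locus computation, with $\rho(2k,1,k+1)=0$), and the upper bound $\dim\mathrm{W}^r_d\leq\rho$ for the general curve (hence finiteness, and emptiness of $\mathrm{W}^2_{k+1}$ since $\rho(2k,2,k+1)=-k-3<0$) from the Cools--Draisma--Payne--Robeva chain-of-loops computation plus Baker's specialization inequality, with irreducibility of $\mathcal{M}_g$ making ``general'' meaningful. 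Both halves are sound, and you are right that the tropical route makes the hypothesis $p>g$ superfluous for this particular lemma --- a genuine (if modest) strengthening. What your approach does not deliver, and what the paper's K3-based references do, is the finer information used downstream (reducedness of $\mathrm{W}^1_{k+1}$ and the count $\frac{1}{k+1}\binom{2k}{k}$, needed in Proposition 3.2 and Lemma 3.3), so the paper's choice keeps the whole argument anchored to the K3 picture; your choice buys independence from \cite{yi-wei} and a weaker characteristic assumption for this one statement. The only point to be careful about, which you flag yourself, is realizing the generic chain of loops as the skeleton of a curve over a valued field with residue characteristic $p$; this is handled in the CDPR framework via deformation of a suitable totally degenerate stable curve and is indeed characteristic-free.
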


\begin{proof}
A primitively polarized $K3$ surface over the field $\mathbb{F}$ of genus $g$ exists, see e.g. \cite[Ch.\ 2.4]{huybrechts-k3}. A general hyperplane section $C$ has nonempty, finite $\mathrm{W}^1_{k+1}(C)$ and $\mathrm{W}^2_{k+1}(C)=\varnothing$ by \cite[Prop.\ 1, Prop.\ 4]{yi-wei}. By constructing Brill--Noether loci on moving curves, as explaned in \S \ref{label-mod-spaces}, it follows that the Brill--Noether locus $\mathrm{W}^1_{k+1}(C)$ is finite and $\mathrm{W}^2_{k+1}(C)=\varnothing$ for the general curve $C$.
\end{proof}

We recall the following lemma, which is \cite[Proposition 2.9]{kemeny-geometric-syzygy}. The result applies directly in our context, as the underlying theory holds for arbitrary algebraically closed fields \cite{gieseker}.
\begin{lemma} \label{gen-node-BN}
	Let $D$ be a general integral curve of arithmetic genus $g=2k\geq 4$ with precisely $m$ nodes for $m \leq k-1$. Then $D$ has gonality $k+1$. Further, $W^2_{k+1}(D)=\varnothing$, any closed point $[A] \in W^1_{k+1}(D)$ corresponds to a \emph{locally free sheaf} $A$ on $D$, and $W^1_{k+1}(D)$ is zero-dimensional and reduced.
\end{lemma}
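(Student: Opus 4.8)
The plan is to understand $\mathrm{W}^1_{k+1}(D)$ entirely through the normalization $\nu \colon \tilde{D} \to D$, where $\tilde{D}$ is a general smooth curve of genus $\tilde{g}=2k-m$ and $D$ is recovered by gluing $m$ general pairs of points $p_i, q_i \in \tilde{D}$. The first step is to translate torsion-free rank-one sheaves on $D$ into linear-series data on $\tilde D$. For a \emph{locally free} $A$ of degree $k+1$, pulling back sections shows $h^0(D,A)\geq 2$ holds exactly when there is a base-point-free pencil $V\subseteq H^0(\tilde{D},\nu^*A)$, defining a degree-$(k+1)$ map $\phi_V\colon \tilde{D}\to\PP^1$, with $\phi_V(p_i)=\phi_V(q_i)$ for every node $i$; the gluing parameter is then forced, and the condition records that a section of the pencil vanishing at $p_i$ must also vanish at $q_i$. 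A sheaf failing to be locally free at a set $S$ of $s$ nodes is the pushforward of a degree-$(k+1-s)$ line bundle from the partial normalization $D_S$ of arithmetic genus $2k-s$, whose sections are computed on $D_S$; by the same translation such a sheaf lies in $\mathrm{W}^1_{k+1}(D)$ precisely when $\tilde D$ carries a degree-$(k+1-s)$ pencil matching the remaining $m-s$ node pairs.

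The base case is the Brill--Noether theory of the general smooth curve $\tilde D$, which holds over an arbitrary algebraically closed field by Gieseker's theorem \cite{gieseker}. Here $\mathrm{W}^1_{k+1}(\tilde{D})$ is smooth of dimension $\rho(2k-m,1,k+1)=m$, generically consisting of base-point-free pencils with $h^0=2$, while the relevant comparison loci have dimensions $\rho(2k-m,2,k+1)=2m-k-3$ and $\rho(2k-m,1,k)=m-2$.

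With this dictionary the four assertions follow from a single dimension count. On the $m$-dimensional locus $\mathrm{W}^1_{k+1}(\tilde{D})$, each condition $\phi_V(p_i)=\phi_V(q_i)$ is one equation, so the $m$ node conditions are expected to cut it down to a reduced, zero-dimensional scheme, producing the locally free part of $\mathrm{W}^1_{k+1}(D)$. For a non-locally-free stratum the expected dimension is $\rho(2k-m,1,k+1-s)-(m-s)=(m-2s)-(m-s)=-s<0$ for $s\geq 1$, so these strata are empty and every $[A]\in \mathrm{W}^1_{k+1}(D)$ is locally free. A descending net would impose at least one condition at each node against the locus $\mathrm{W}^2_{k+1}(\tilde{D})$ of dimension $2m-k-3$, giving expected dimension at most $(2m-k-3)-m=m-k-3<0$ since $m\leq k-1$, whence $\mathrm{W}^2_{k+1}(D)=\varnothing$. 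Finally a degree-$k$ pencil on $D$ would force a $g^1_k$ on $\tilde D$ matching all $m$ nodes, of expected dimension $(m-2)-m<0$; so $D$ has no $g^1_k$, and together with $\mathrm{W}^1_{k+1}(D)\neq\varnothing$ this yields gonality exactly $k+1$.

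The main obstacle will be \emph{transversality}: one must upgrade these expected dimensions to actual ones, showing that for general $p_i,q_i$ the node conditions meet $\mathrm{W}^1_{k+1}(\tilde{D})$ transversally so the intersection is reduced of dimension $0$, rather than only of dimension $\geq 0$ from the determinantal lower bound. I would establish this by a tangent-space computation in the spirit of the Gieseker--Petri analysis, arguing that varying the points moves the conditions independently, and I would secure nonemptiness and the length $\frac{1}{k+1}\binom{2k}{k}$ by a flat smoothing of $D$ to a general smooth curve of genus $2k$ together with properness of the relative compactified Jacobian. In positive characteristic one must additionally check that the maps $\phi_V$ and the pertinent evaluation maps remain generically separable, which is where the hypothesis on $p$ enters; this is exactly the input that lets the characteristic-zero argument of \cite{kemeny-geometric-syzygy} be transported verbatim.
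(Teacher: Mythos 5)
The paper does not actually prove this lemma: it is quoted verbatim from \cite[Prop.~2.9]{kemeny-geometric-syzygy}, with the only added content being the observation that the underlying Brill--Noether input (Gieseker's theorem) is valid over an arbitrary algebraically closed field. Your proposal, by contrast, reconstructs what is essentially the standard proof of that cited proposition: pass to the normalization $\tilde D$ of genus $2k-m$, translate rank-one torsion-free sheaves on $D$ into pencils on $\tilde D$ (or on partial normalizations, for the non-locally-free strata) subject to one matching condition per node, and kill each unwanted stratum by a Brill--Noether dimension count. All of your numerics are correct: $\rho(2k-m,1,k+1)=m$, $\rho(2k-m,1,k+1-s)-(m-s)=-s$, $\rho(2k-m,2,k+1)-m=m-k-3<0$ for $m\leq k-1$, and $\rho(2k-m,1,k)-m=-2$; and obtaining nonemptiness by smoothing and properness of the relative $W^1_{k+1}$ is legitimate and characteristic-free.

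Two caveats. First, as you yourself flag, the entire content of the reducedness assertion sits in the transversality step: you must show that for general $(p_i,q_i)$ the $m$ node conditions cut $G^1_{k+1}(\tilde D)$ transversally, \emph{and} that this transversality on the space of pencils over $\tilde D$ translates into reducedness of the determinantal scheme $W^1_{k+1}(D)$ inside the compactified Jacobian (your dictionary as written is set-theoretic, so a tangent-space identification at a point $[A]$ in terms of a Petri-type multiplication map on $\tilde D$ twisted by the node data is genuinely needed, not optional). Since the paper outsources exactly this to \cite[Prop.~2.9]{kemeny-geometric-syzygy}, your sketch is acceptable as a plan but is not yet a proof of the hardest claim. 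Second, your closing remark that a hypothesis on $p$ enters via separability of $\phi_V$ is a red herring: the lemma carries no lower bound on the characteristic, and none is needed --- Gieseker's theorem and the Kempf/Kleiman--Laksov existence results are characteristic-free, which is precisely the point the paper makes when importing the statement. (The length $\frac{1}{k+1}\binom{2k}{k}$ you mention is also not part of this lemma.)
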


\begin{proposition}\label{prop-geometric-nodal}
    Let $\pi \, : \, \mathcal{C} \to S$ be a flat family of equidimensional $1$ dimensional, projective varieties over a smooth, integral variety $S$, each of even arithmetic genus $2k$, and assume $\pi$ admits a section into the smooth locus of $\mathcal{C}$.
    
    Assume further, that for some closed point $0 \in S$, $\mathcal{C}_0:=\pi^{-1}(0)$ is an integral, Gorenstein curve with empty Brill--Noether variety $W^2_{k+1}(\mathcal{C}_0)$ and with $W^1_{k+1}(\mathcal{C}_0)$ finite and smooth of cardinality $\frac{1}{k+1} \binom {2k}{k}$, with $\omega_{\mathcal{C}_0}$ very ample and normally generated, and the map $\Delta^*_{\mathcal{C}_0,k-1}$ as above is injective. Assume further $\mathcal{C}_0$ may be flatly deformed to a smooth curve and $\mathrm{K}_{k,1}(\mathcal{C}_0,\omega_{\mathcal{C}_0})=0$.
    
    Then there is a nonempty open set $U \seq S$ about $0$ such that $\pi^{-1}(U)$ is a flat family of integral, Gorenstein, curves such that the map $\Delta^*_{\mathcal{C}_s,k-1}$ above is injective and such that $\mathrm{K}_{k,1}(\mathcal{C}_s,\omega_{\mathcal{C}_s})=0$ for all closed points $s \in U$. 
\end{proposition}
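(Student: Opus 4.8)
The plan is to show that every hypothesis imposed on the central fibre $\mathcal{C}_0$ defines an open locus in $S$, and to take $U$ to be the intersection of the resulting neighbourhoods of $0$. I would begin with the properties that are visibly open in a flat proper family. Since $\mathcal{C}_0$ is integral and Gorenstein, and both geometric integrality of the fibres and the relative Gorenstein property are open conditions for a flat proper morphism, after shrinking $S$ we may assume every fibre $\mathcal{C}_s$ is integral and Gorenstein. On this locus the relative dualizing sheaf $\omega_{\mathcal{C}/S}$ is a line bundle restricting to $\omega_{\mathcal{C}_s}$ on each fibre, and $h^0(\mathcal{C}_s,\omega_{\mathcal{C}_s}^{\otimes m})$ is independent of $s$ by Riemann--Roch, so each $\pi_*\omega_{\mathcal{C}/S}^{\otimes m}$ is locally free and compatible with base change. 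Very ampleness of $\omega_{\mathcal{C}/S}$ is open, and normal generation amounts to surjectivity of the multiplication maps $H^0(\omega_{\mathcal{C}_s})\otimes H^0(\omega_{\mathcal{C}_s}^{\otimes m})\to H^0(\omega_{\mathcal{C}_s}^{\otimes m+1})$, which by Castelnuovo--Mumford regularity need only be checked for finitely many $m$ and is again open; shrinking $S$ we may assume $\omega_{\mathcal{C}_s}$ is very ample and normally generated for all $s$.

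Next I would control the relative Brill--Noether loci inside $\overline{J}^{k+1}(\mathcal{C})$. The locus $W^2_{k+1}(\mathcal{C}/S)$ is closed and proper over $S$ with empty fibre over $0$, so its image omits a neighbourhood of $0$ and $W^2_{k+1}(\mathcal{C}_s)=\varnothing$ there; upper semicontinuity of fibre dimension for the proper map $W^1_{k+1}(\mathcal{C}/S)\to S$ keeps the fibres finite near $0$. Because $W^1_{k+1}(\mathcal{C}/S)$ is a determinantal subscheme of the expected codimension whose fibre over $0$ has the expected dimension $0$, it is Cohen--Macaulay near $0$, so miracle flatness over the regular base $S$ makes $\mathcal{W}:=W^1_{k+1}(\mathcal{C}/S)\to S$ finite flat of degree $N:=\frac{1}{k+1}\binom{2k}{k}$; as its fibre over $0$ is reduced of length $N$, the map is finite étale on a neighbourhood $U'$ of $0$.

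The main work is to make $\Delta^*_{\mathcal{C}_s,k-1}$ vary in a family and to see that its injectivity is open. Form the relative kernel bundle $\mathcal{M}$ as the kernel of $\pi^*\pi_*\omega_{\mathcal{C}/S}\to\omega_{\mathcal{C}/S}$. By Lemma \ref{lemma-kernel-bdl}, $\mathrm{K}_{k,1}(\mathcal{C}_s,\omega_{\mathcal{C}_s})\cong H^1(\mathcal{C}_s,\wedge^{k+1}\mathcal{M}|_{\mathcal{C}_s})$ and $\mathrm{K}_{k-2,2}(\mathcal{C}_s,\omega_{\mathcal{C}_s})\cong H^1(\mathcal{C}_s,\wedge^{k-1}\mathcal{M}|_{\mathcal{C}_s}\otimes\omega_{\mathcal{C}_s})$; both vanish at $0$ (the first by hypothesis, the second by Green's self-duality $\mathrm{K}_{k-2,2}\cong \mathrm{K}_{k,1}^\vee$ for the Gorenstein curve $\mathcal{C}_0$), hence by semicontinuity of $h^1$ they vanish on a neighbourhood of $0$. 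On that neighbourhood the weight-$k$ Koszul complex $\wedge^k H^0(\omega)\to\cdots\to H^0(\omega^{\otimes k})$ has cohomology only in the spot computing $\mathrm{K}_{k-1,1}$: the groups $\mathrm{K}_{k,0}$ and $\mathrm{K}_{0,k}$ vanish by projective normality, $\mathrm{K}_{k-2,2}$ vanishes as just noted, and $\mathrm{K}_{k-i,i}$ for $i\geq 3$ vanishes by Green's duality. Since the terms of the complex have $s$-independent dimension, its Euler characteristic is constant, forcing $\dim\mathrm{K}_{k-1,1}(\mathcal{C}_s)$ to be constant; by cohomology and base change the relative group is then a vector bundle $\mathcal{K}$.

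Finally, globalising the construction of \cite[Prop.\ 2.7]{kemeny-geometric-syzygy} via the universal sheaf on $\mathcal{C}\times_S\overline{J}^{k+1}(\mathcal{C})$ yields, over $U'$, a morphism $\mathcal{W}\to\mathbb{P}(\mathcal{K})$ restricting fibrewise to $\Delta_{\mathcal{C}_s,k-1}$, and hence a map of vector bundles $\mathcal{K}^\vee\to q_*\big(\Delta^*\mathcal{O}_{\mathbb{P}(\mathcal{K})}(1)\big)$, where $q\,:\,\mathcal{W}\to U'$ is the finite étale structure map of degree $N$. By base change its fibre at $s$ is exactly $\Delta^*_{\mathcal{C}_s,k-1}$. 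Injectivity of a map of vector bundles is open---the non-vanishing of a suitable $\dim\mathcal{K}\times\dim\mathcal{K}$ minor---and it holds at $0$ by hypothesis, so it holds on an open $U\ni0$; together with the vanishing $\mathrm{K}_{k,1}(\mathcal{C}_s,\omega_{\mathcal{C}_s})=0$ already arranged, this $U$ is as required. I expect the delicate points to be the flatness and étaleness of the relative Brill--Noether scheme, where the expected-codimension and Cohen--Macaulay input is essential, and the base-change compatibility of the relativised $\Delta$; it is precisely the hypothesis $\mathrm{K}_{k,1}(\mathcal{C}_0,\omega_{\mathcal{C}_0})=0$, via the constancy of $\dim\mathrm{K}_{k-1,1}$, that converts the spanning statement into an open condition.
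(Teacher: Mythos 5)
Your overall strategy coincides with the paper's: shrink $S$ using openness of integrality and the Gorenstein property, use semicontinuity for the vanishing of $\mathrm{K}_{k,1}$, build a vector bundle $\mathcal{K}$ with fibres $\mathrm{K}_{k-1,1}(\mathcal{C}_s,\omega_{\mathcal{C}_s})$, make the relative Brill--Noether scheme finite and flat over a neighbourhood of $0$, relativize $\Delta$ to a map of vector bundles, and conclude by openness of injectivity of such a map. The one step you handle genuinely differently is the flatness of $q \colon \mathcal{W}^1_{k+1}(\mathcal{C}) \to S$, and that is where there is a gap. Your miracle-flatness argument needs $\mathcal{W}^1_{k+1}(\mathcal{C})$ to be Cohen--Macaulay of pure dimension $\dim S$ near the central fibre; you deduce this from ``determinantal of the expected codimension,'' but that inference requires the ambient scheme $\overline{J}^{k+1}(\mathcal{C}/S)$ to be Cohen--Macaulay and equidimensional of dimension $\dim S + 2k$. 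For a family of integral Gorenstein curves this is not automatic: when the fibres have non-planar singularities the compactified Jacobian $\overline{J}^{k+1}(\mathcal{C}_s)$ can fail to be of pure dimension $2k$ (the Altman--Iarrobino--Kleiman/Rego phenomenon), and nothing in the hypotheses gives Cohen--Macaulayness of the relative compactified Jacobian. So both the codimension count and the CM claim need justification that is missing.

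It is telling that your argument never invokes the hypothesis that $\mathcal{C}_0$ may be flatly deformed to a smooth curve. That hypothesis, together with the assumed cardinality $\frac{1}{k+1}\binom{2k}{k}$ of $W^1_{k+1}(\mathcal{C}_0)$, is exactly what the paper uses at this step: after shrinking so that $q$ is finite, flatness over the reduced base is equivalent to local constancy of the fibre length; upper semicontinuity gives that the length of $W^1_{k+1}(\mathcal{C}_s)$ is at most $\frac{1}{k+1}\binom{2k}{k}$, while deforming $\mathcal{C}_s$ to a smooth curve and invoking Kempf's enumerative count gives the reverse inequality. Replacing your miracle-flatness paragraph with this constant-length argument (keeping your observation that reducedness of the central fibre then makes $q$ \'etale, and your relativization of $\Delta^*$, which matches the paper's) would close the gap. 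The rest of your write-up --- in particular the Euler-characteristic argument for the constancy of $\dim \mathrm{K}_{k-1,1}$ via vanishing of the other Koszul groups in the weight-$k$ strand --- is consistent with what the paper imports from \cite[Lemma 4.10]{kemeny-geometric-syzygy}.
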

\begin{proof}
Being integral is an open condition, as is being Goreinstein \cite[Lemma 48.25.11]{stacks}, so there is a nonempty open set $U \seq S$ such that $\pi^{-1}(U)$ is a flat family of integral, Gorenstein curves. By semicontinuity of Koszul cohomology, see e.g.\ \cite[Lemma 2.2]{kemeny-geometric-syzygy} (this generalizes unchanged to $\mathbb{F}$ general), we may further shrink $U$ to a Zariski open neighbourhood of $0$ such that $\mathrm{K}_{k,1}(\mathcal{C}_s,\omega_{\mathcal{C}_s})=0$ for all closed points $s \in U$. Now by \cite[Lemma 4.10]{kemeny-geometric-syzygy}, which works verbatim in the characteristic $p$ context, we have a vector bundle $\mathcal{K}$ on $U$ with $\mathcal{K}_b \simeq \mathrm{K}_{k,1}(\mathcal{C}_s,\omega_{\mathcal{C}_s})$ for all closed points $s \in U$. We now argue as in \cite[Prop 4.11]{kemeny-geometric-syzygy}. We have a projective morphism $q \, :\, \mathcal{W}^1_{k+1}(\mathcal{C}) \to S$. By the assumption $W^1_{k+1}(\mathcal{C}_0)$ finite, which implies $q$ is a finite morphism after replacing $S$ with a Zariski neighbourhood of the origin. To prove $q$ is flat, it suffices to show $h^0(\mathcal{O}_{\mathcal{C}_s})=h^0(\mathcal{O}_{\mathcal{C}_0})=\frac{1}{k+1}\binom{2k}{k}$. By semicontinuity $h^0(\mathcal{O}_{\mathcal{C}_s}) \leq h^0(\mathcal{O}_{\mathcal{C}_0})=\frac{1}{k+1}\binom{2k}{k}$, whereas, by deforming $\mathcal{C}_s$ to a smooth curve and using the computation of the cardinality of $W^1_{k+1}(\mathcal{C}_s)$ for a smooth curve such that the Brill--Noether loci is smooth and zero-dimensional (as in \S $1$), we see $h^0(\mathcal{O}_{\mathcal{C}_s})\geq \frac{1}{k+1}\binom{2k}{k}$. So we deduce $h^0(\mathcal{O}_{{\mathcal{C}_s}})=h^0(\mathcal{O}_{\mathcal{C}_0})$ and $q$ is flat.

By the assumption of the existence of a smooth section, we have a universal torsion free sheaf $\mathcal{A}$ on $\mathcal{C}\times_S \mathcal{W}^1_{k+1}$, and further $p_* \mathcal{A}$ is a rank two vector bundle $V$ on $\mathcal{W}^1_{k+1}$ (after possibly shrinking it around the central fibre), by Grauert's theorem, where $p$ denotes the projection $\mathcal{C}\times_S \mathcal{W}^1_{k+1} \to \mathcal{W}^1_{k+1}$. Define $\mathcal{X}^1_{k+1} \to \mathcal{W}^1_{k+1}$ to be $\mathrm{Proj}(V^{\vee})$, which we can think of as a space of $g^1_{k+1}$s as in \cite[\S 2]{kemeny-geometric-syzygy}, and which is a $\PP^1$ bundle over $\mathcal{W}^1_{k+1}$. By \cite[Lemma 4.10]{kemeny-geometric-syzygy} (which is unchanged in positive characteristic), since we are assuming $\mathrm{K}_{k,1}(\mathcal{C}_0,\omega_{\mathcal{C}_0})=0$ and $\omega_{\mathcal{C}_0}$ globally generated, after shrinking $U$ we have a vector bundle $\mathcal{K}$ on $U$, with $\mathcal{K}_s \simeq \mathrm{K}_{k-1,1}(\mathcal{C}_s,\omega_{\mathcal{C}_s})$ for any point $s \in U$. As in \cite[Prop.\ 4.11]{kemeny-geometric-syzygy} we can construct a morphism $\gamma \, : \, \mathcal{X}^1_{k+1} \to \mathrm{Proj}(\mathcal{K}^{\vee})$ over $U$. For any point $s \in U$, $\gamma$ induces a morphism $\gamma_s \, : \, \mathcal{X}^1_{k+1}(\mathcal{C}_s) \to \PP_s$ for $\PP_s:=\mathrm{Proj}(\mathcal{K}^{\vee}_s)$, which induces the map
$$\Delta_{\mathcal{C}_s,k-1} \, : \, \mathrm{H}^0(\mathcal{O}_{\PP}(1)) \to \mathrm{H}^0(\gamma_s^*\mathcal{O}_{\PP}(1)) ,$$ which, by \cite[Prop.\ 2.7]{kemeny-geometric-syzygy}, satisfies $\Delta^*_{\mathcal{C}_s,k-1}$ above is injective for all points $s \in U$, after shrinking $U$ if necessary. 

\end{proof}

\begin{proposition} \label{existence-rational}
    Let $g=2k \geq 4$, and let $\mathbb{F}$ be an algebraically closed field of characteristic $p> 2k$. Then there exists an integral, Gorenstein, projective, rational curve $C$ of genus $g$ over $\mathbb{F},$ with $\omega_C$ very ample and normally generated, with $W^2_{k+1}(C)=\emptyset$, with $W^1_{k+1}(C)$ finite and smooth of cardinality $\frac{1}{k+1} \binom {2k}{k}$ and such that the map $$\Delta^*_{C,k-1} \; : \; \mathrm{H}^0(\mathcal{O}_{\PP}(1)) \to \mathrm{H}^0(\Delta^*\mathcal{O}_{\PP}(1)),$$
is injective, with $\PP:=\PP(\mathrm{K}_{k-1,1}(C,\omega_C))$.
\end{proposition}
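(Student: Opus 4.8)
The plan is to realize the desired curve as a general member of the irreducible family of nodal rational canonical curves, reducing every condition to an open one and then producing a single good seed. First I would fix the irreducible parameter space $M=\mathrm{Mor}_{2g-2}(\PP^1_{\bbF},\PP^{g-1}_{\bbF})$ from \S\ref{label-mod-spaces} and let $M^{\circ}\subseteq M$ be the locus of maps $f$ that are the normalization of their image $C=f(\PP^1)$, an integral Gorenstein curve of arithmetic genus $g=2k$ with exactly $g$ nodes and with $\omega_C$ very ample and normally generated. By the construction recalled in \S\ref{label-mod-spaces} (choosing $g$ pairs of general points, \cite{CFHR}, \cite{Catanese}) the set $M^{\circ}$ is nonempty, and it is open, hence irreducible. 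After an \'etale base change to secure a section into the smooth locus I obtain a flat family $\pi\colon\mathcal C\to M^{\circ}$ of such curves. Each condition in the statement is open on $M^{\circ}$: the loci where $W^2_{k+1}\ne\varnothing$, where $\dim W^1_{k+1}\ge 1$, where the length of $W^1_{k+1}$ exceeds $\frac1{k+1}\binom{2k}{k}$, and where $\Delta^*_{C,k-1}$ fails to be injective are all closed, the last by rank-drop of the sheaf map underlying the family version of $\Delta_{C,k-1}$. So it suffices to exhibit one $[f]\in M^{\circ}$ for which all four hold; irreducibility then propagates them to a dense open subset, and in particular to the general member.

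For the Brill--Noether conditions I would use that a general $g$-nodal rational curve is Brill--Noether general. Since $\rho(2k,1,k+1)=0$, the count $\frac1{k+1}\binom{2k}{k}$ is the expected (and, for a general smooth curve, actual) number of $g^1_{k+1}$'s, and this generality extends to a general $g$-nodal rational curve in characteristic $p$ by the degeneration arguments of \cite{gieseker} underlying the Brill--Noether theorem. This yields $W^2_{k+1}=\varnothing$ and $W^1_{k+1}$ reduced of the correct length for the general member of $M^{\circ}$, analogous to the regime $m\le k-1$ of Lemma \ref{gen-node-BN} but now pushed to $m=g$ nodes, which lies genuinely outside the range of that lemma.

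The crux is the injectivity of $\Delta^*_{C,k-1}$, and this is exactly where the characteristic-zero method breaks down. By \cite{yi-wei} injectivity holds for a general smooth curve of genus $2k$ (the geometric syzygy conjecture for the last linear strand), witnessed concretely by a general hyperplane section $C_0$ of a primitively polarized genus-$g$ K3 surface (Lemma \ref{BN finite}), which satisfies every hypothesis of Proposition \ref{prop-geometric-nodal}; but unlike over $\CC$ we cannot place a nodal rational curve in $|L|$ on the K3 and read its syzygies off the surface. Instead I would observe that the smooth canonical curves and the $g$-nodal rational canonical curves lie in one irreducible component $H$ of the Hilbert scheme of canonical curves in $\PP^{g-1}$ — the latter are flat limits of the former, since $\overline{M_g}$ is irreducible and the relative dualizing sheaf of a smoothing furnishes a relative canonical embedding — and that over $H$ the morphism $\Delta$ is defined uniformly by the vector-bundle construction recalled in the proof of Proposition \ref{prop-geometric-nodal} (after \cite[Lemma 4.10, Prop.\ 4.11]{kemeny-geometric-syzygy}). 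Thus $\Delta^*$ is the fibrewise restriction of a single morphism of sheaves on $H$, whose injectivity locus is open and, by \cite{yi-wei}, dense. The main obstacle is then precisely the \emph{direction} of specialization: an open condition valid on the smooth stratum need not survive to the special nodal stratum, so I must show that the nodal rational locus $\mathcal R\subseteq H$ is not contained in the degeneracy locus of $\Delta^*$. This is where irreducibility of $M$ is essential — it exhibits $\mathcal R$ as the image of the single irreducible $M^{\circ}$, along which the universal $g^1_{k+1}$'s and the resulting minimal-rank syzygies vary flatly (by the Brill--Noether generality just established), so that injectivity of $\Delta^*$ at the generic point of $M^{\circ}$ becomes a single rank computation intrinsic to $\mathcal R$ rather than a statement to be transported across from the smooth locus. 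Carrying out that computation, and thereby producing one nodal rational curve with $\Delta^*$ injective, is the heart of the argument; the opening reduction then upgrades it to the general member and completes the proof.
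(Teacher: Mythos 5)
Your proposal diverges from the paper's proof in a way that leaves the central claim unproved. Note first that Proposition \ref{existence-rational} does not ask for a \emph{nodal} curve, only an integral, Gorenstein, rational one; the paper exploits this by taking $C$ to be an integral rational curve in the primitive linear system $|L|$ on a general polarized K3 surface (existence via \cite{bht} together with a specialization from the Picard-rank-one geometric generic fibre of Ogus's versal family). Every hard property then comes from the K3: $W^2_{k+1}(C)=\varnothing$, $h^0(A)=2$ for all $[A]\in W^1_{k+1}(C)$, the count $\frac{1}{k+1}\binom{2k}{k}$, the vanishing $\mathrm{K}_{k,1}(X_t,\mathcal{L}_t)=0$ from \cite{yi-wei} (which gives very ampleness and normal generation of $\omega_C$), and --- crucially --- the injectivity of $\Delta^*_{C,k-1}$, which is reduced to the geometric generic fibre lying on a Picard-rank-one K3 and then follows by the argument of \cite[Prop.\ 4.9]{kemeny-geometric-syzygy}. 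By insisting on a $g$-nodal curve and working only inside $M=\mathrm{Mor}_{2g-2}(\PP^1,\PP^{g-1})$, you discard the K3 surface and with it the only available source of these facts. The passage from the Gorenstein rational curve on the K3 to a nodal one is the content of the \emph{next} proposition, Proposition \ref{prop-nodal-global-injection}, via the Chow variety and the irreducibility of $M$; folding that step into this one is precisely what creates the difficulty you then cannot resolve.

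Concretely, two steps are gaps. First, you correctly observe that injectivity of $\Delta^*$ on the smooth stratum cannot be specialized to the nodal stratum, but your remedy --- that it ``becomes a single rank computation intrinsic to $\mathcal{R}$'' --- is exactly the statement to be proved, and you explicitly leave it unperformed; this is the heart of the proposition, not a routine verification. Second, the Brill--Noether claims ($W^2_{k+1}=\varnothing$, $W^1_{k+1}$ reduced of length $\frac{1}{k+1}\binom{2k}{k}$) for a general $g$-nodal rational curve lie, as you yourself note, outside the range $m\le k-1$ of Lemma \ref{gen-node-BN}, and invoking ``the degeneration arguments of \cite{gieseker}'' does not supply them: for such a maximally degenerate curve one must also control non-locally-free sheaves in the compactified Jacobian, and the paper obtains all of this instead from the Picard-rank-one K3 by Lazarsfeld-style arguments. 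As written, the proposal reduces the proposition to two unproved assertions that are essentially equivalent to it.
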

\begin{proof}
Let $(X,L)$ be a \emph{general}, primitively polarized $K3$ surface of genus $g$ over $\mathbb{F}$. The $K3$ surface $X$, being general, is not supersingular, so there is a rational, but not necessarily integral, curve in $|L|$, \cite[Theorem 18]{bht}. We first claim that $|L|$ contains an integral rational curve. For this, we argue as in \cite{yi-wei}. By work of Ogus \cite{ogus} there is an $\mathbb{F}$-versal deformation $\pi \, : \, (\mathcal{X}, \mathcal{L}) \to \mathcal{T}$ of $(X,L)$, where $\mathcal{T}$ is an irreducible variety (whilst Ogus works with formal schemes, one can construct the moduli space of polarized $K3$ surfaces with usual schemes, \cite{huybrechts-k3}), with geometric general fibre $\mathcal{X}_{\overline{\tau}}$ satisfying $\mathrm{Pic}(\mathcal{X}_{\overline{\tau}})=\mathbb{Z}[\mathcal{L}_{\overline{\tau}}]$. As $\mathcal{X}_{\overline{\tau}}$ is a $K3$ surface of Picard rank one, all divisors in $|\mathcal{L}_{\overline{\tau}}|$ are integral, and further, as $\mathcal{L}_{\overline{\tau}}$ is ample, \cite[Prop.\ 17]{bht} gives a rational, integral curve $R_{\overline{\tau}} \seq \mathcal{X}_{\overline{\tau}}$. Then the image, $R_{\tau} \seq \mathcal{X}_{\tau}$, where $\mathcal{X}_{\tau}$ is the (non-geometric) generic fiber of $\mathcal{X} \to \mathcal{T}$, is also integral and its component is geometrically rational. Furthermore, there exists a divisor $\mathcal{C} \seq \mathcal{X}$ in $|\mathcal{L}|$ specializing to $R_{\tau}$, and all fibres of $\mathcal{C}$ are rational because the property of having geometrically rational components is closed, \cite[Prop.\ II.2, Cor.\ II.2.4]{kollar}. Since integrality is an open condition, as is being Goreinstein \cite[Lemma 48.25.11]{stacks}, there exists a nonempty open $U \seq \mathcal{T}$, such that, for any closed point $u \in U$, the polarized surface $(X_u,L_u)$ admits the integral, Gorenstein, rational curve $C_u$.

Consider the above family of integral curves $\pi: \mathcal{C}\to\mathcal{T}$. After replacing $\mathcal{T}$ with an \'{e}tale neighbourhood is necessary, we may assume $\pi$ admits a smooth section. The geometric generic fiber $C_{\overline{\tau}}$ lies on a K3 surface $X_{\overline{\tau}}$ with $\mathrm{Pic}(X_{\overline{\tau}})=\bbZ[C_{\overline{\tau}}]$. Arguing as in \cite[Lemma 4.2]{kemeny-geometric-syzygy}, $W^1_d(C_{\overline{\tau}})=\varnothing$ for $d\leq k$ and each $[A]\in W^1_{k+1}(C_{\overline{\tau}})$ has $h^0(C_{\overline{\tau}},A)=2$ and is base point free. As in \cite[Lemma 3.2]{yi-wei}, 
as $C_{\overline{\tau}}\to C_{\tau}$ is a flat base change, shrinking $\mathcal{T}$ if necessary, we have a coherent sheaf $\cA$ on $\mathcal{C}$, such that $\cA_t\in W^1_{k+1}(\mathcal{C}_t)$ and $h^0(\mathcal{C}_t,\cA_t)=2$ and $\cA_t$ base point free (notice the existence of universal sections requires a smooth section) for every closed point $t \in \mathcal{T}$. Moreover, $W^1_{k+1}(C_t)$ is reduced  and $|W^1_{k+1}(C_t)|=\frac{1}{k+1}\binom{2k}{k}$ provided $p>g=2k$ (cf.\ \cite[Proposition 4.6]{kemeny-geometric-syzygy},\cite[Proposition 4]{yi-wei}).

In \cite{yi-wei}, as in the proof of \cite[Theorem 3]{yi-wei} in $\S 4$ (which implies \cite[Theorem 1]{yi-wei}), after shrinking $\mathcal{T}$, we also have $\mathrm{K}_{k,1}(X_t,\mathcal{L}_t)=0$ for any point $t\in\mathcal{T}$. Notice that this condition immediately implies that the curves $C_t \in |L_t|$ have $\omega_{C_t}$ very ample and normally generated. Let $(\mathcal{C}\subseteq \mathcal{X}\to \mathcal{T})$ be the family of embedded curves as in the last paragraph. Shrinking $\mathcal{T}$ if necessary, we obtain that every fiber $C_t$ is integral, Gorenstein and rational. We have a vector bundle $\mathcal{K}$ over $\mathcal{T}$, with $\mathcal{K}_t \simeq \mathrm{K}_{k-1,1}(X_t,\mathcal{L}_t)\simeq \mathrm{K}_{k-1,1}(C_t,\omega_{C_t})$ for any point $t \in \mathcal{T}$ (where the last isomorphism is by the Lefschetz theorem \cite{green-koszul}) as in \cite[Lemma 4.10]{kemeny-geometric-syzygy}.  As in the proof of Proposition \ref{prop-geometric-nodal}, we have a flat family $g \, : \,\mathcal{X}_{k+1} \to \mathcal{T} $, whose fibres parametrize all $g^1_{k+1}$s on the rational curve $\mathcal{C}\to \mathcal{T}$. As in \cite[Prop.\ 4.11]{kemeny-geometric-syzygy} we can construct a morphism $\gamma \, : \, \mathcal{X}^1_{k+1} \to \mathrm{Proj}(\mathcal{K}^{\vee})$ over $\mathcal{T}$. For any point $t \in \mathcal{T}$, $\gamma$ induces a morphism $\gamma_t \, : \, \mathcal{X}^1_{k+1}(\mathcal{C}_t) \to \PP_t$ for $\PP_t:=\mathrm{Proj}(\mathcal{K}^{\vee}_t)$, which induces the map
$$\Delta^*_{\mathcal{C}_t,k-1} \, : \, \mathrm{H}^0(\mathcal{O}_{\PP_t}(1)) \to \mathrm{H}^0(\Delta_{\mathcal{C}_t,k-1}^*\mathcal{O}_{\PP_t}(1)) .$$
Shrinking $\mathcal{T}$ if necessary, it suffices to show that 
$$\Delta^*_{\mathcal{C}_{\tau},k-1}: H^0(\mathcal{O}_{\PP_{\tau}}(1))\to H^0(\Delta_{\mathcal{C}_{\tau},k-1}^*\mathcal{O}_{\PP_{\tau}}(1))$$ is injective, where $\tau$ is the generic point of $\mathcal{T}$. Therefore, it is enough to show its base change to its geometric closure $k(\overline{\tau})$:  $\Delta^*_{\mathcal{C}_{\overline{\tau},k-1}}:=\Delta^*_{\mathcal{C}_{\tau},k-1}\otimes k(\overline{\tau})$ is injective. Note that $C_{\overline{\tau}}$ is an integral curve lying on a K3 surface $X_{\overline{\tau}}$ with $\mathrm{Pic}(X_{\overline{\tau}})=\bbZ[C_{\overline{\tau}}]$. The desired injection is obtained by a similar argument in \cite[Proposition 4.9]{kemeny-geometric-syzygy}.
\end{proof}

\begin{proposition} \label{prop-nodal-global-injection}
Set $g=2k \geq 4$, and let $\mathbb{F}$ be an algebraically closed field of characteristic $p> 2k$. There exists an integral, rational, \emph{nodal} curve $C$ of even arithmetic genus $g=2k \geq 4$ such that the pull-back map on global sections
$$\Delta^*_{C,k-1} \; : \; H^0(\mathcal{O}_{\PP}(1)) \to H^0(\Delta^*\mathcal{O}_{\PP}(1)),$$
with $\PP:=\PP(\mathrm{K}_{k-1,1}(C,\omega_C))$, is injective.

\end{proposition}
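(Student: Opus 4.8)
The plan is to transport the injectivity of $\Delta^*_{C,k-1}$ from the (possibly non-nodal) rational curve produced by Proposition \ref{existence-rational} to a genuinely \emph{nodal} rational curve, exploiting that injectivity is an open condition in families and that nodal rational canonical curves are dense among all integral rational canonical curves. Write $C_0$ for the curve of Proposition \ref{existence-rational}: it is an integral, Gorenstein, rational curve of genus $g=2k$ with $\omega_{C_0}$ very ample and normally generated, with $W^2_{k+1}(C_0)=\varnothing$ and $W^1_{k+1}(C_0)$ finite, smooth, of cardinality $\frac{1}{k+1}\binom{2k}{k}$, and for which $\Delta^*_{C_0,k-1}$ is injective. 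Its canonical image lies in $\PP^{g-1}$, and the normalization $\PP^1\to C_0\seq\PP^{g-1}$ is a point of the irreducible variety $M=\mathrm{Mor}_{2g-2}(\PP^1,\PP^{g-1})$ of \S\ref{label-mod-spaces}, which also contains the normalizations of the nodal rational canonical curves constructed there.

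First I would build an irreducible family of integral Gorenstein rational canonical curves linking $C_0$ to a nodal curve. Realizing $C_0$ (as in Proposition \ref{existence-rational}) as a member of $|L|$ on a smooth $K3$ surface $X$, all its singularities are \emph{planar}, with total $\delta$-invariant equal to $g$. Over a smooth, integral base $S$ I take an equigeneric ($\delta$-constant) deformation of the abstract curve $C_0$ with central fibre $\mathcal{C}_0=C_0$; since the singularities are planar and $p>g\ge\delta$, the generic fibre acquires exactly $g$ nodes, so the nodal fibres form a dense open subset of $S$. Because being integral and Gorenstein are open conditions and $\omega_{C_0}$ is very ample and normally generated, after shrinking $S$ every fibre is an integral Gorenstein rational curve with $\omega_{\mathcal{C}_s}$ very ample and normally generated, and the relative dualizing sheaf yields a flat family $\pi\colon\mathcal{C}\to S$ of integral Gorenstein canonical curves of even arithmetic genus $2k$ whose generic member is nodal. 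After an \'etale base change I may assume $\pi$ has a section into its smooth locus, as permitted in \S\ref{label-mod-spaces}.

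Next I would apply Proposition \ref{prop-geometric-nodal} to $\pi$ with central fibre $\mathcal{C}_0=C_0$. Every hypothesis holds at $C_0$: integrality, the Gorenstein property, $W^2_{k+1}(C_0)=\varnothing$, finiteness and smoothness of $W^1_{k+1}(C_0)$ with cardinality $\frac{1}{k+1}\binom{2k}{k}$, very ampleness and normal generation of $\omega_{C_0}$, and injectivity of $\Delta^*_{C_0,k-1}$ are all furnished by Proposition \ref{existence-rational}; moreover $C_0$ deforms flatly to a smooth curve (being a member of $|L|$ on $X$), and $\mathrm{K}_{k,1}(C_0,\omega_{C_0})=0$ via the Lefschetz isomorphism with $\mathrm{K}_{k,1}(X,L)=0$, again as in Proposition \ref{existence-rational}. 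Proposition \ref{prop-geometric-nodal} then produces a nonempty open $U\seq S$ about $0$ on which $\Delta^*_{\mathcal{C}_s,k-1}$ is injective for every closed point $s\in U$. As $S$ is irreducible, $U$ is dense, and as the nodal fibres are dense in $S$, some closed point $s\in U$ has $\mathcal{C}_s$ nodal; this $\mathcal{C}_s$ is the required integral, rational, nodal curve with $\Delta^*_{\mathcal{C}_s,k-1}$ injective.

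The main obstacle is the construction in the second step: realizing $C_0$ as the special fibre of an irreducible equigeneric deformation whose general fibre is nodal, while keeping the family integral, Gorenstein, rational and canonically embedded. The density of the nodal locus rests on the classical fact that a planar curve singularity deforms to $\delta$ nodes inside its $\delta$-constant stratum, which is available here precisely because $C_0$ is a divisor on the smooth surface $X$ and $p>g\ge\delta$ excludes wild behaviour; one must also verify that very ampleness and normal generation of the dualizing sheaf, which hold on the central fibre, persist on a neighbourhood of $0$, so that the canonical model of the family genuinely lands in $\PP^{g-1}$ and so that Proposition \ref{prop-geometric-nodal} applies unchanged. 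Granting this, the irreducibility of the space of rational curves recorded in \S\ref{label-mod-spaces} is exactly what guarantees that the dense injective locus $U$ meets the dense nodal locus.
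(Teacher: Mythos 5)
Your overall strategy --- start from the curve $C_0$ of Proposition \ref{existence-rational}, apply Proposition \ref{prop-geometric-nodal} to a family over an irreducible base through $C_0$, and intersect the resulting dense open locus where $\Delta^*$ is injective with a dense locus of nodal fibres --- has the same skeleton as the paper's argument. The divergence, and the gap, lies in how you produce an irreducible family linking $C_0$ to nodal rational curves. You propose an equigeneric ($\delta$-constant) deformation of $C_0$ whose generic fibre has exactly $g$ nodes, justified by the classical fact that a planar curve singularity deforms to $\delta$ nodes inside its $\delta$-constant stratum. That fact is a theorem of Teissier and Diaz--Harris in characteristic zero; in characteristic $p$ it is not available in the generality you need, and the remark that ``$p>g\geq\delta$ excludes wild behaviour'' is an assertion, not an argument: the pathologies of equigeneric and equinormalizable deformations in characteristic $p$ are governed by the local structure of the singularities, and all you know about the singularities of $C_0$ is that they are planar with total $\delta$-invariant $g$ (a single complicated unibranch point is entirely possible), so no comparison of $p$ with $\delta$ alone settles the matter. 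This is exactly the obstacle the paper identifies in its introduction --- one cannot ensure the existence of the needed nodal rational curves by deforming out of the K3 situation --- so your second step rests on the unproven point.

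The paper's own proof sidesteps equigeneric deformation entirely. It observes that $M=\mathrm{Mor}_{2g-2}(\PP^1,\PP^{g-1})$ is irreducible (an open subset of a projective space of tuples of forms) and contains both the normalization of $C_0$ and the normalizations of nodal rational canonical curves, whose existence is elementary: identify $g$ general pairs of points on $\PP^1$ and embed the resulting curve by its dualizing sheaf. It then transfers the two relevant conditions --- ``$\Delta^*$ injective,'' obtained from Proposition \ref{prop-geometric-nodal} applied over (an \'etale cover of the seminormalization of) the Hilbert scheme, and ``integral nodal canonical'' --- to open subsets of $M$ by pushing their complements forward along the proper Hilbert-to-Chow morphism and pulling back along $M^{\mathrm{sn}}\to\mathrm{Chow}(\PP^{g-1})$; irreducibility of $M$ then forces the two nonempty opens to meet. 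To repair your route you would need either to establish the positive-characteristic equigeneric statement for the uncontrolled singularities of $C_0$, or to replace your deformation step by an intersection argument of this Chow-theoretic type.
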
 
\begin{proof}
Recall the notion of seminormalization, \cite[\S I.7.2]{kollar}. In particular, if $X$ is a  finite type scheme over a field $\mathbb{F}$, then the seminormalization $X^{\mathrm{sn}}$ comes with a finite (and therefore proper) \emph{homeomorphism} $\psi \, : \, X^{\mathrm{sn}} \to X$ \cite[\S 29.47]{stacks}; Therefore, topologically speaking, it is often harmless to replace a scheme with its seminormalization. We also need the notion of Chow varieties \cite[I.3, I.4]{kollar}, which can be constructed in positive characteristic and parametrize families of algebraic cycles.

Let $C_0$ be an integral, Gorenstein rational curve as in Proposition \ref{existence-rational}, considered as a subscheme of $\PP_{\mathbb{F}}^{g-1}$ via the very ample line bundle $\omega_{C_0}$. Let $H$ be the Hilbert scheme of closed subschemes of $\PP_{\mathbb{F}}^{g-1}$ with the same Hilbert polynomial as that of the subscheme $C_0 \seq \PP_{\mathbb{F}}^{g-1}$. Further let $\psi 
\, : \, H^{\mathrm{sn}} \to H$ denote the seminormalization of $H$. Pulling the universal family back via $\psi$, we have a universal family $\widetilde{\pi} \, : \, \widetilde{\mathcal{C}} \to H^{\mathrm{sn}}$, with $0 \in H^{\mathrm{sn}}$ a closed point and central fibre $\widetilde{\pi}^{-1}[0] \simeq C_0$. Further after taking an \'{e}tale cover $\phi \, : \, \overline{H} \to H^{\mathrm{sn}}$ the pullback $\pi \, : \, \mathcal{C} \to \overline{H}$ admits a smooth section.

By Proposition \ref{prop-geometric-nodal}, we have a nonempty open set $U \seq \overline{H}$, containing a point $0' \in \phi^{-1}(0)$, parametrizing integral, Gorenstein curves $\mathcal{C}_s$, for $s \in U$, such that the map $\Delta^*_{\mathcal{C}_s,k-1}$ defined above is injective and further $\mathrm{K}_{k,1}(\mathcal{C}_s,\omega_{\mathcal{C}_s})=0$. Since an \'{e}tale map is in particular flat, and therefore open, $\phi(U) \seq H^{\mathrm{sn}}$ is open, and let the closed set $Z \seq H^{\mathrm{sn}}$ denote its complement. Note $0 \in \phi(U)$. By \cite[Theorem I.6.3]{kollar}, we have a morphism $$f \, : \, H^{\mathrm{sn}} \to \mathrm{Chow}(\PP^{g-1}),$$ 
where $\mathrm{Chow}(\PP^{g-1})$ is a component of the Chow variety containing the point parametrized by the cycle $[C_0]$. Further, $f$ is injective near the point $0$, since $C_0$ is integral. Now the Hilbert scheme is proper over $\mathbb{F}$, and the seminormalization map is finite and hence proper, so $H^{\mathrm{sn}}$ is proper over $\mathbb{F}$ and thus $f$ is a proper morphism. So $f(Z)$ is a closed subset of $\mathrm{Chow}(\PP^{g-1})$ and $f(0) \notin f(Z)$ since $f$ is injective at $0$. Define the open set $U'_1 := \mathrm{Chow}(\PP^{g-1}) \setminus f(Z)$, which contains $f(0)$.

Recall the irreducible, quasi projective variety $M:=\mathrm{Mor}_{2g-2}(\PP^1_{\bbF},\PP_{\bbF}^{g-1})$ from earlier which contains a point $x \in M$ corresponding to the normalization $\mu \, : \, \PP^1 \to C_0$ of $C_0$ composed with the canonical map $C_0 \to \PP^{g-1}_{\mathbb{F}}$. Let $x' \in M^{\mathrm{sn}}$ be the corresponding point under the homeomorphism $M^{\mathrm{sn}} \to M$. By \cite[Corollary I.6.9]{kollar}, we have a morphism $$g \, : \, M^{\mathrm{sn}} \to \mathrm{Chow}(\PP^{g-1}),$$ which takes $x$ to $f(0)$. Define $U_1:=g^{-1}(U_1')$, which is a nonempty open subset of $M$ (under the homeomorphism $M^{\mathrm{sn}}\to M$), and $U_1$ contains $x$. One may consider those points in $U_1\subseteq M$ as rational curves $C$ in $\PP^{g-1}_{\bbF}$ satisfying $\Delta_{C,k-1}^*$ injective.

Now there exists an open subset $\widetilde{U} \seq H$ consisting of integral, nodal, canonical curves, and it is clearly nonempty since there exist points corresponding to nodal, rational curves in $M$. Consider the closed subset $f(H^{\mathrm{sn}} \setminus \psi(\widetilde{U}))$ and its complement $U_2':=\mathrm{Chow}(\PP^{g-1}) \setminus f(H^{\mathrm{sn}} \setminus \psi(\widetilde{U}))$, which is a nonempty open set because $f$ is injective on $\psi(\widetilde{U})$ (in fact, as $f$ is injective on $\psi(\widetilde{U})$, we have $U_2'=f(\psi^{-1}(\widetilde{U})$). Now define $U_2:= g^{-1}(U_2')$, which is a nonempty open subset of $M$ (under the homeomorphism $M^{\mathrm{sn}}\to M$). One may consider those points in $U_2\subseteq M$ as rational nodal canonical curves $C\subseteq \PP^{g-1}_{\bbF}$.

Hence there exists a morphism $h \, : \, \PP^1_{\mathbb{F}} \to \PP^{g-1}_{\mathbb{F}}$ with $[h] \in U_1 \cap U_2$. Then $h(\PP^1_{\mathbb{F}}) \seq \PP^{g-1}_{\mathbb{F}}$ is an integral, nodal rational curve with $\Delta^*_{C,k-1}$ injective.
\end{proof}

We conclude this section by showing Green's conjecture for a general nodal curve in positive characteristics.
\begin{proposition}\label{prop-green-conjecture}
    Let $C$ be a general, integral $m$-nodal curve over an algebraically closed field $\bbF$ with $\mathrm{char}(\bbF)=p>0$ of arithmetic genus $g\geq 4$ for any $0\leq m \leq g$. Assume $p\geq\frac{g+4}{2}$. Then $C$ satisfies Green's conjecture. Precisely, $\mathrm{K}_{\lfloor \frac{g}{2} \rfloor,1}(C,\omega_C)=0$,
    or equivalently, we have for $i\leq \lfloor \frac{g}{2} \rfloor-1$,
    $$\mathrm{K}_{i,2}(C,\omega_C)=0.$$
\end{proposition}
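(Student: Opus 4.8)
The plan is to deduce the required vanishing for the general $m$-nodal curve from the case of a single, highly degenerate curve, by running semicontinuity of Koszul cohomology in the favourable (generization) direction. First note that, by the standard self-duality of the minimal resolution of a projectively normal Gorenstein canonical curve (see \cite{aprodu-nagel}), the two displayed vanishings are equivalent, so it suffices to prove $\mathrm{K}_{\lfloor g/2 \rfloor,1}(C,\omega_C)=0$. This vanishing is an open condition in any flat family of integral Gorenstein canonical curves, by semicontinuity of Koszul cohomology \cite[Lemma 2.2]{kemeny-geometric-syzygy} (which generalizes verbatim to $\mathbb{F}$), and the family of integral $m$-nodal curves of arithmetic genus $g$ is irreducible (it fibres, via normalization, over the irreducible moduli space of smooth genus $g-m$ curves, with irreducible fibres given by the choice of $m$ pairs of points to be glued). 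Hence it is enough to exhibit one integral $m$-nodal curve whose canonical model satisfies the vanishing.

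To handle all $m$ uniformly, I would work with a $g$-nodal rational canonical curve $C_0$. Colliding $g-m$ further pairs of branches realizes $C_0$ as a specialization of an $m$-nodal curve, so that inside the versal deformation of $C_0$ --- whose nearby fibres are all integral and Gorenstein --- the $m$-nodal locus has $[C_0]$ in its closure for every $0\leq m\leq g$. Since the vanishing locus is open, any such locus containing the central point $[C_0]$ automatically contains all of its generizations; thus, once the vanishing is verified for the single curve $C_0$, it holds at some integral $m$-nodal curve, and then, by openness on the irreducible family above, for the general $m$-nodal curve, simultaneously for every $m$.

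It remains to check $\mathrm{K}_{\lfloor g/2 \rfloor,1}(C_0,\omega_{C_0})=0$. I would realize $C_0$ on a general primitively polarized K3 surface $(X,L)$ of genus $g$, exactly as in Proposition \ref{existence-rational}: such a surface exists over $\mathbb{F}$ \cite{huybrechts-k3}, is non-supersingular of Picard rank one, and $|L|$ contains an integral rational --- hence $g$-nodal Gorenstein --- curve by \cite{bht}, with $\omega_{C_0}=L|_{C_0}$ by adjunction. Since $H^1(X,L^{\otimes q})=0$ for all $q$, Green's hyperplane restriction (Lefschetz) theorem \cite{green-koszul} identifies $\mathrm{K}_{i,j}(C,L|_C)$ with $\mathrm{K}_{i,j}(X,L)$ for every integral $C\in|L|$. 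Applying this to a smooth section $C_{\mathrm{sm}}\in|L|$ --- which has maximal Clifford index because $\mathrm{Pic}(X)=\mathbb{Z}[L]$ --- gives $\mathrm{K}_{\lfloor g/2 \rfloor,1}(X,L)\cong \mathrm{K}_{\lfloor g/2 \rfloor,1}(C_{\mathrm{sm}},\omega_{C_{\mathrm{sm}}})=0$ by Green's conjecture for curves of maximal Clifford index in characteristic $p\geq\frac{g+4}{2}$ (\cite{AFPRW},\cite{rs}); applying it to $C_0$ then yields $\mathrm{K}_{\lfloor g/2 \rfloor,1}(C_0,\omega_{C_0})\cong\mathrm{K}_{\lfloor g/2 \rfloor,1}(X,L)=0$, as needed.

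The main obstacle is the characteristic bound. Everything after the vanishing $\mathrm{K}_{\lfloor g/2 \rfloor,1}(X,L)=0$ is formal, but this vanishing is precisely Green's conjecture for the smooth K3 section, and I need it in the optimal-type range $p\geq\frac{g+4}{2}$ rather than in the weaker range $p>g$ supplied by the K3 vanishing of \cite{yi-wei}; extracting the sharp bound from the positive-characteristic inputs is where the hypothesis on $p$ is consumed and is the delicate point. Secondary, but necessary, verifications are that the generization family stays within the locus of integral Gorenstein curves (so that semicontinuity applies and $\omega_C$ remains the dualizing sheaf) and that $\omega_{C_0}$, and hence $\omega_C$ for the general $m$-nodal $C$, is very ample and normally generated --- both of which follow from the projective normality of the K3 sections, itself a consequence of the vanishing just established.
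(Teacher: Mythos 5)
Your overall skeleton --- produce one rational canonical curve satisfying the vanishing via a general primitively polarized K3 surface and the Lefschetz theorem, then smooth out nodes and use irreducibility plus semicontinuity to reach the general $m$-nodal curve for every $m$ --- matches the paper's. But there is a genuine gap at the pivot of your argument: the parenthetical ``integral rational --- hence $g$-nodal'' is false. An integral rational curve in $|L|$ is Gorenstein (it lies on a smooth surface), but in positive characteristic it is \emph{not} known that a general primitively polarized K3 surface carries a nodal rational curve in the primitive class; the nodality statement is Xi Chen's characteristic-zero theorem, and its failure to transfer is exactly the ``major difficulty'' flagged in the introduction of this paper. Your specialization step (``colliding $g-m$ further pairs of branches'') requires $C_0$ to be $g$-nodal, so with only a Gorenstein rational $C_0$ in hand the argument does not close: you would need to know that $C_0$ is a specialization of integral $m$-nodal curves, which for arbitrary Gorenstein (possibly unibranch, e.g.\ cuspidal) singularities in characteristic $p$ is not available.

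The paper repairs this by decoupling nodality from the K3 surface. The space $M=\mathrm{Mor}_{2g-2}(\PP^1,\PP^{g-1})$ is irreducible and contains both the normalization of the Gorenstein rational canonical curve supplied by the K3 (where the Koszul vanishing holds) and points parametrizing honestly $g$-nodal rational canonical curves, obtained by gluing $g$ general pairs of points on $\PP^1$ and embedding by $\omega_D$ via \cite{CFHR}. Transporting the vanishing condition to an open subset of the Chow variety as in Proposition \ref{prop-nodal-global-injection} and intersecting with the (nonempty, open) nodal locus inside the irreducible $M$ produces a $g$-nodal rational canonical curve with $\mathrm{K}_{\lfloor g/2\rfloor,1}=0$; only at that point does your colliding-and-smoothing step become available. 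Note also that this openness argument must be run on a parameter space of \emph{embedded} curves (Hilbert scheme / Chow variety) rather than on the abstract versal deformation, precisely so that the nodal curves coming from $M$ --- which need not lie on any K3 surface --- can be compared with the K3 curve.
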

\begin{proof}
It is enough to show that there exists a rational nodal curve $C$ of arithmetic genus $g$ satisfying Green's conjecture. Indeed, we can deform the $g$ nodal curve $C$ and smooth out any number of nodes to obtain an integral $m$-nodal curve with $0\leq m \leq g$. Exactly as in the proof of Proposition \ref{existence-rational}, there exists an integral, Gorenstein, projective, rational curve $C$ on a general polarized K3 surface $(X,L)$ of degree $2g-2$, where $C\in |L|$. Moreover, by \cite[Theorem 1]{yi-wei} and its proof, we obtain the desired vanishing $\mathrm{K}_{\lfloor \frac{g}{2} \rfloor,1}(C,\omega_C)=\mathrm{K}_{\lfloor \frac{g}{2} \rfloor,1}(X,L)=0$. To go from Gorenstein to nodal, we argue as in the proof of Proposition \ref{prop-nodal-global-injection}, using the irreducible space $M:=\mathrm{Mor}_{2g-2}(\PP^1_{\bbF},\PP_{\bbF}^{g-1})$. Thus we see that there exists a morphism $h:\bbP^1_{\bbF}\to\bbP^{g-1}_{\bbF}$, such that $h(\bbP^1_{\bbF})\subseteq \bbP^1_{\bbF}$ is an integral, $g$-nodal nodal rational curve satisfying the vanishing $K_{\lfloor \frac{g}{2} \rfloor,1}(C,\omega_C)=0$. 
\end{proof}

\section{Projection of Syzygies}
Let $(X,L)$ be a polarized variety over a field $\bbF$. For any linear subspace $V\subseteq H^0(X,L)$ and an element $0\neq x\in V^{\vee}$, consider the following exact sequence
$$0\to W_x \to V \xrightarrow{\mathrm{ev}_x} \bbF\to 0 $$
Taking exterior powers induces an exact sequence for $i\leq p-1$:
$$0\to \bigwedge^i W_x \to \bigwedge^i V \xrightarrow{\iota_x} \bigwedge^{i-1} W_x \to 0.$$
The natural map $\iota_x\otimes \mathrm{id}:\bigwedge^i V\otimes H^0(L)\to \bigwedge^{i-1}W_x \otimes H^0(L)$ induces \textit{Aprodu's projection} map on Koszul cohomology groups:
\begin{equation}
    \mathrm{pr}_x: \mathrm{K}_{i,1}(X,L;V) \to \mathrm{K}_{i-1,1}(X,L;W_x)
\end{equation}

Let $C$ be an integral, nodal curve of arithmetic genus $g\geq 4$, and assume that
the canonical linear system $\omega_C$ is very ample and $C\subseteq\bbP^{g-1}$ is projectively normal. Assume $C$ has precisely $m$ nodes and no other singularities. Choose general points $x,y\in C_{\mathrm{sm}}$ in the
smooth locus of $C$ and let $D$ be the nodal curve of genus $g+1$ obtained by identifying $x,y$. Then $\omega_D$ is very ample. Let $v\in D$ be the node over $x,y$ and let $\mu: C \to D$ be the partial normalization map. Note that $(D,\omega_D)$ is normally generated \cite[p. 16]{kemeny-geometric-syzygy}. Consider the projection 
$$\pi_v:\bbP^g \dashrightarrow \bbP^{g-1}$$
from the node $v$. Then $C=\overline{\pi_v(D)}\subseteq\bbP^{g-1}$. Since $\mu_*\omega_C\simeq\omega_D\otimes I_v$, we have a short exact sequence
\begin{equation}\label{eqn-exact-1}
    0 \to H^0(C,\omega_C) \to H^0(D,\omega_D) \xrightarrow{\mathrm{ev}_v} \bbF \to 0    
\end{equation}
It induces Aprodu's projection on $\mathrm{K}_{i,1}(D,\omega_D)$ for every integer $i$. By \cite[Lem 3.1]{aprodu-higher}, the map factors through 
\begin{equation}\label{eqn-pr}
    \mathrm{pr}: \mathrm{K}_{i,1}(D,\omega_D) \to \mathrm{K}_{i-1,1}(C,\omega_C).
\end{equation}

 Let $Z\subseteq\bbP^g$ be the cone over $C\subseteq\bbP^{g-1}$ with vertex at $v$. Denote by $\nu:\widetilde{Z}\to Z$ the blow-up at $v$. The strict transformation $D'\subseteq\widetilde{Z}$ is isomorphic to $C$ with $\mu\sim \nu|_{D'}$. We have $\widetilde{Z}\simeq \bbP(\cO\oplus \omega_C)$ and $\cO_{\widetilde{Z}}(D')\simeq \H\otimes \iota^*\cO_C(x+y)$, where $\H$ is the pull-back from $\cO_{\bbP^{g}}(1)$ and $\iota:\bbP(\cO\oplus\omega_C) \to C$ is the fibration (\cite[Lem 3.2]{projecting}). Then $H^0(\widetilde{Z},\H)\simeq H^0(D,\omega_D)$. Note $\H|_{D'}\simeq \mu^*\omega_D\simeq\omega_C(x+y)$. Therefore, we have natural identifications 
\begin{equation}\label{eqn-iso-1}
    \mathrm{K}_{i,1}(C,\omega_C(x+y))\simeq \mathrm{K}_{i,1}(D',\cO_{D'}(1)) \simeq \mathrm{K}_{i,1}(D,\omega_D).    
\end{equation}

Consider the short exact sequence defining the ideal sheaf of $D'\subseteq \widetilde{Z}$:
$$ 0 \to \H^{\vee}(-\iota^*(x+y)) \to \cO_{\widetilde{Z}} \to \cO_{D'} \to 0$$
which induces an exact sequence of $\cS$-modules, where $\cS:=\mathrm{Sym}(H^0(\widetilde{Z},\H))$, (see \cite[\S 3]{kemeny-geometric-syzygy})
{\small{
\begin{equation}\label{eqn-exact-2}
    0 \to  \bigwedge^i H^0(\H)\otimes \left(\bigoplus_{j \in \mathbb{Z}} H^0\bigg( \mathcal{H}^{\otimes j-1}(-\iota^*(x+y)) \bigg)\right) \to \bigwedge^i H^0(\H)\otimes \Gamma_{\widetilde{Z}}(\H) \to \bigwedge^i H^0(\H)\otimes \Gamma_D(\omega_D) \to 0
\end{equation}
}}

The hyperplane section induces an isomorphism (\cite[Prop 4.3]{projecting}, \cite[Lem 3.3]{kemeny-geometric-syzygy})
\begin{equation}\label{eqn-iso-2}
    \mathrm{K}_{i,1}(\widetilde{Z},\H)\simeq \mathrm{K}_{i,1}(C,\omega_C), \quad \mathrm{K}_{i,1}(\widetilde{Z},-\iota^*\cO_C(x+y);\H)\simeq \mathrm{K}_{i,1}(C,-x-y;\omega_C)    
\end{equation}
then (\ref{eqn-exact-2}) induces a natural long exact sequence (\cite[Prop 4.3]{projecting})
\begin{equation}\label{eqn-long-exact-1}
    0 \to \mathrm{K}_{i,1}(C,\omega_C) \to \mathrm{K}_{i,1}(D,\omega_D) \xrightarrow{\delta} \mathrm{K}_{i-1,1}(C,-x-y;\omega_C) \to \mathrm{K}_{i-1,2}(C,\omega_C)
\end{equation}
For each $i\leq p-1$, the sequence (\ref{eqn-exact-1}) induces exact sequence as $\cS\simeq \mathrm{Sym}(H^0(D,\omega_D))$-modules:
\begin{equation}\label{eqn-exact-3}
    0 \to \wedge^i H^0(C,\omega_C) \otimes \Gamma_D(\omega_D)\to \wedge^i H^0(D,\omega_D) \otimes \Gamma_D(\omega_D) \to \wedge^{i-1}H^0(C,\omega_C) \otimes \Gamma_D(\omega_D)\to 0
\end{equation}
It induces a natural long exact sequence which commutes with (\ref{eqn-long-exact-1}):
{\footnotesize{
$$\begin{tikzcd}
    0 \arrow[r] & \mathrm{K}_{i,1}(C,\omega_C) \arrow[r] \arrow[d, "\alpha_{i,1}"] & \mathrm{K}_{i,1}(D,\omega_D) \arrow[r, "\delta"] \arrow[d, "\beta_{i,1}", "\simeq"'] & \mathrm{K}_{i-1,1}(C,-x-y;\omega_C) \arrow[r] \arrow[d, "\epsilon_{i-1,1}"] & \mathrm{K}_{i-1,2}(C,\omega_C) \arrow[d, "\alpha_{i-1,2}"] \\
    0 \arrow[r] & \mathrm{K}_{i,1}(D,\omega_D;H^0(\omega_C)) \arrow[r] & \mathrm{K}_{i,1}(D,\omega_D) \arrow[r, "\mathrm{pr}"] & \mathrm{K}_{i-1,1}(D,\omega_D;H^0(\omega_C)) \arrow[r] & \mathrm{K}_{i-1,2}(D,\omega_D;H^0(\omega_C))
\end{tikzcd}$$}}
The commutativity is obvious once those maps are explicitly defined. Indeed, consider the restriction map $H^0(\widetilde{Z},\H^{\otimes j}) \simeq H^0(Z,\cO_Z(j)) \to H^0(D,\cO_D(j))\simeq H^0(D,\omega_D^{\otimes j})$, the hyperplane map $H^0(\widetilde{Z},\H)\rightarrowdbl H^0(C,\omega_C)$ and the diagram of Koszul complexes commutes:
{\small{
\[
\begin{tikzcd}
   \wedge^{i+1} H^0(\widetilde{Z},\H) \otimes H^0(\H^{\otimes j-1}) \arrow[r] \arrow[d] & \wedge^i H^0(\widetilde{Z},\H) \otimes H^0(\H^{\otimes j}) \arrow[r] \arrow[d] & \wedge^{i-1} H^0(\widetilde{Z},\H) \otimes H^0(\H^{\otimes j+1}) \arrow[d] \\
   \wedge^{i+1} H^0(C,\omega_C) \otimes H^0(\omega_D^{\otimes j-1}) \arrow[r] & \wedge^i H^0(C,\omega_C) \otimes H^0(\omega_D^{\otimes j}) \arrow[r] & \wedge^{i-1} H^0(C,\omega_C) \otimes H^0(\omega_D^{\otimes j+1})
\end{tikzcd}
\]
}}
Taking cohomology groups yields 
$$\alpha_{i,j}: \mathrm{K}_{i,j}(C,\omega_C)\simeq \mathrm{K}_{i,j}(\widetilde{Z},\H) \to \mathrm{K}_{i,j}(D,\omega_D;H^0(\omega_C)).$$ 

The isomorphism $\beta_{i,j}$ is induced by the isomorphism $H^0(\widetilde{Z},\H)\simeq H^0(D,\omega_D)$. 

Lastly, consider the multiplication $H^0(\widetilde{Z}, \H^{\otimes j}(-\iota^*(x+y)))\to H^0(\widetilde{Z}, \H^{\otimes j})\simeq H^0(D,\omega_D^{\otimes j})$ induced by pulling-back $\iota^*$ a section of $\cO_C(x+y)$ and the diagram of Koszul complexes:
{\small{
\[
\begin{tikzcd}
    0 \arrow[r] \arrow[d]  & \wedge^{i-1}H^0(\widetilde{Z},\H)\otimes H^0(\H(-\iota^*(x+y))) \arrow[r] \arrow[d] & \wedge^{i-2}H^0(\widetilde{Z},\H) \otimes H^0(\H^{\otimes 2}(-\iota^*(x+y))) \arrow[d] \\
   \wedge^{i}H^0(C,\omega_C) \arrow[r] & \wedge^{i-1}H^0(C,\omega_C)\otimes H^0(\omega_D) \arrow[r] & \wedge^{i-2}H^0(C,\omega_C)\otimes H^0(\omega_D^{\otimes 2})
\end{tikzcd}
\]
}}
Taking cohomology groups yields 
$$\epsilon_{i-1,1}: \mathrm{K}_{i-1,1}(C,-x-y;\omega_C)\simeq \mathrm{K}_{i-1,1}(\widetilde{Z},-\iota^*\cO_C(x+y);\H)\to \mathrm{K}_{i-1,1}(D,\omega_D;H^0(\omega_C)).$$

Let $\gamma_{x,y}: \mathrm{K}_{i-1,1}(C, -x-y; \omega_C) \to \mathrm{K}_{i-1,1}(C,\omega_C)$ be the natural morphism induced by multiplying the same section of $\cO_C(x+y)$ from before. Note the projection map $\mathrm{pr}$ factors through $\mathrm{K}_{i-1,1}(C,\omega_C)$; see (\ref{eqn-pr}). We have the following commutative diagram
\begin{equation}\label{eqn-cd-1}
    \begin{tikzcd}
    \mathrm{K}_{i,1}(D,\omega_D) \arrow[rd, "\mathrm{pr}"] \arrow[r, "\delta"] & \mathrm{K}_{i-1,1}(C,-x-y;\omega_C) \arrow[d, "\gamma_{x,y}"] \arrow[dr, "\epsilon_{i-1,1}"] \\
    & \mathrm{K}_{i-1,1}(C,\omega_C) \arrow[r, hook] & \mathrm{K}_{i-1,1}(D,\omega_D;H^0(\omega_C))
\end{tikzcd}    
\end{equation}
This proves the following important lemma.
\begin{lemma} \label{im-pr-nodal}
Let $C$ be an integral $m$-nodal curve over $\bbF$ with $\mathrm{char}(\bbF)=p>0$ of arithmetic genus $g$ for any $0 \leq m \leq g$ such that $(C,\omega_C)$ is normally generated. Let $D$ be the $m+1$ nodal curve obtained by identifying general points $x, y$ in the smooth locus of $C$. For any $0\leq i \leq p-1$, the image of $\mathrm{pr}: \mathrm{K}_{i,1}(D,\omega_D) \to \mathrm{K}_{i-1,1}(C,\omega_C)$ is contained in 
$$\gamma_{x,y}(\mathrm{K}_{i-1,1}(C,-x-y , \omega_C))\subseteq \mathrm{K}_{i-1,1}(C,\omega_C).$$
\end{lemma}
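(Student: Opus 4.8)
The plan is to read the conclusion directly off the commutative diagram~(\ref{eqn-cd-1}), every constituent of which has already been assembled before the statement. The essential content is the factorization $\mathrm{pr} = \gamma_{x,y} \circ \delta$, where $\delta$ is the connecting map in the long exact sequence~(\ref{eqn-long-exact-1}) and $\gamma_{x,y} \, : \, \mathrm{K}_{i-1,1}(C,-x-y;\omega_C) \to \mathrm{K}_{i-1,1}(C,\omega_C)$ is multiplication by the chosen section of $\cO_C(x+y)$. Granting this identity, the image of $\mathrm{pr}$ equals $\gamma_{x,y}\big(\delta(\mathrm{K}_{i,1}(D,\omega_D))\big)$, which is visibly contained in $\gamma_{x,y}\big(\mathrm{K}_{i-1,1}(C,-x-y;\omega_C)\big)$, exactly as asserted; no estimate or dimension count is needed once the factorization is in place.

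To obtain the factorization I would first recall the two long exact sequences in play, both arising from short exact sequences of graded $\cS$-modules: the sequence~(\ref{eqn-long-exact-1}), coming from the ideal sheaf of the strict transform $D' \seq \widetilde{Z}$ in the blow-up $\widetilde{Z} \simeq \bbP(\cO \oplus \omega_C)$, and the Aprodu projection sequence induced by~(\ref{eqn-exact-3}), attached to the evaluation $H^0(\omega_D) \rightarrowdbl \bbF$ at the node $v$. These fit into the commuting ladder displayed above, whose middle vertical arrow $\beta_{i,1}$ is an isomorphism induced by $H^0(\widetilde{Z},\H) \simeq H^0(D,\omega_D)$. Commutativity of the middle square gives $\mathrm{pr} \circ \beta_{i,1} = \epsilon_{i-1,1} \circ \delta$, and since $\epsilon_{i-1,1}$ itself factors as the inclusion $\mathrm{K}_{i-1,1}(C,\omega_C) \hookrightarrow \mathrm{K}_{i-1,1}(D,\omega_D;H^0(\omega_C))$ composed with $\gamma_{x,y}$, the injectivity of that inclusion together with $\beta_{i,1}$ being an isomorphism yields $\mathrm{pr} = \gamma_{x,y} \circ \delta$ for the form of $\mathrm{pr}$ of~(\ref{eqn-pr}) landing in $\mathrm{K}_{i-1,1}(C,\omega_C)$.

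The only genuine labor, all of which precedes the statement, is checking the commutativity of the ladder; I would verify it at the level of the Koszul complexes, using the explicit maps on global sections --- the restriction $H^0(\widetilde{Z},\H^{\otimes j}) \to H^0(D,\omega_D^{\otimes j})$, the hyperplane surjection $H^0(\widetilde{Z},\H) \rightarrowdbl H^0(C,\omega_C)$, and multiplication by a section of $\cO_C(x+y)$ --- and observing that each square of complexes commutes before passing to cohomology. The subtlety worth flagging is the characteristic-$p$ restriction $0 \leq i \leq p-1$: this is precisely the range in which the exterior-power sequence $0 \to \wedge^i W_x \to \wedge^i V \to \wedge^{i-1}W_x \to 0$ stays exact, so that Aprodu's projection and the factorization~(\ref{eqn-pr}) through $\mathrm{K}_{i-1,1}(C,\omega_C)$ are defined at all; outside this range the hypothesis makes the assertion vacuous. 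With commutativity secured, the image containment follows immediately.
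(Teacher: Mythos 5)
Your argument is the paper's own: the lemma is read off the commutative triangle~(\ref{eqn-cd-1}), which encodes the factorization $\mathrm{pr}=\gamma_{x,y}\circ\delta$ obtained by comparing the two long exact sequences via the ladder with $\beta_{i,1}$ an isomorphism and using that $\epsilon_{i-1,1}$ factors through the injection $\mathrm{K}_{i-1,1}(C,\omega_C)\hookrightarrow \mathrm{K}_{i-1,1}(D,\omega_D;H^0(\omega_C))$ of Lemma~\ref{lemma-linear-syzygy}. Your verification of commutativity at the level of Koszul complexes and your remark on the range $0\leq i\leq p-1$ for exactness of the exterior-power sequence match the paper's treatment, so the proposal is correct and essentially identical in approach.
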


Another important lemma is the following:
\begin{lemma} \label{twisted-van}
Let $C$ be a general, integral $m$-nodal curve over $\bbF$ with $\mathrm{char}(\bbF)=p>0$ of arithmetic genus $g\geq 4$ for any $0 \leq m \leq g$. Assume $p\geq \frac{g+4}{2}$. Let $x,y \in C$ be general points in the smooth locus of $C$. For $i \leq \lfloor \frac{g}{2} \rfloor-2$, we have $$\mathrm{K}_{i,2}(C,-x-y, \omega_C)=0.$$
Further, if $i \neq g-2$ then $\mathrm{K}_{i,3}(C,-x-y, \omega_C)=0$.
\end{lemma}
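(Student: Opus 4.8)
The plan is to transport both statements, via Green's duality theorem for Koszul cohomology \cite{green-koszul}, into groups of non-positive weight, where they become either a bare Koszul comultiplication (for $j=3$) or a group controlled by Green's conjecture for $C$ itself (for $j=2$). Since $C$ is Gorenstein with dualizing sheaf $\omega_C$ and $(C,\omega_C)$ is normally generated, the duality is available (it rests only on Serre duality and the Koszul complex, so holds in arbitrary characteristic); as $x,y$ lie in the smooth locus, $\mathcal{O}_C(-x-y)$ is a line bundle and $\omega_C\otimes\mathcal{O}_C(x+y)=\omega_C(x+y)$. With $r=h^0(\omega_C)-1=g-1$ duality reads
\[
\mathrm{K}_{i,j}(C,-x-y,\omega_C)^{\vee}\;\cong\;\mathrm{K}_{g-2-i,\,2-j}\big(C,\omega_C(x+y),\omega_C\big),
\]
and a group vanishes iff its dual does. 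Thus the claim about $\mathrm{K}_{i,3}$ lands in weight $2-j=-1$, and the one about $\mathrm{K}_{i,2}$ in weight $0$.

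For the statement on $\mathrm{K}_{i,3}$ I would compute the dual directly. Put $a=g-2-i$. The graded pieces of $\Gamma_C(\omega_C(x+y),\omega_C)$ in weights $-2,-1,0$ are $H^0(\omega_C^{-1}(x+y))$, $H^0(\mathcal{O}_C(x+y))$ and $H^0(\omega_C(x+y))$; the first vanishes because $\deg\omega_C^{-1}(x+y)=-2g+4<0$ and $C$ is integral, while $H^0(\mathcal{O}_C(x+y))=\bbF$ for general $x,y$. Hence $\mathrm{K}_{a,-1}(C,\omega_C(x+y),\omega_C)$ is the kernel of the comultiplication
\[
\bigwedge^{a}H^0(\omega_C)\longrightarrow\bigwedge^{a-1}H^0(\omega_C)\otimes H^0(\omega_C(x+y))
\]
induced by the inclusion $H^0(\omega_C)\hookrightarrow H^0(\omega_C(x+y))$. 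This map is injective for $a\ge1$ in every characteristic, since composing with the coordinate functionals on the second factor recovers all contractions $\iota_{\xi}\alpha$ ($\xi\in H^0(\omega_C)^{\vee}$), and their simultaneous vanishing forces $\alpha=0$. So the group vanishes for $i\le g-3$; it also vanishes for $i\ge g-1$ (then $a<0$ and $\bigwedge^{a}H^0(\omega_C)=0$), whereas at $a=0$, i.e.\ $i=g-2$, the kernel is all of $\bbF$. This is exactly the excluded value, and the argument uses no hypothesis on $p$.

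For the statement on $\mathrm{K}_{i,2}$ the dual group is $\mathrm{K}_{a,0}(C,\omega_C(x+y),\omega_C)$ with $a=g-2-i\ge\lceil g/2\rceil$. Writing $E=\bigwedge^{a}M_{\omega_C}\otimes\omega_C$, the cokernel presentation of Koszul cohomology through the kernel bundle identifies this group with $\operatorname{coker}\big(\bigwedge^{a+1}H^0(\omega_C)\to H^0(E(x+y))\big)$, the Koszul map factoring as $\bigwedge^{a+1}H^0(\omega_C)\to H^0(E)\hookrightarrow H^0(E(x+y))$. Filtering $H^0(E)\subseteq H^0(E(x+y))$ and using the evaluation sequence of $0\to E\to E(x+y)\to E|_{x+y}\to0$ produces a short exact sequence
\[
0\to\mathrm{K}_{a,1}(C,\omega_C)\to\mathrm{K}_{a,0}(C,\omega_C(x+y),\omega_C)\to\ker r'\to0,
\]
where $r'\colon E_x\oplus E_y\to H^1(E)$ is the connecting map. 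Since $a\ge\lceil g/2\rceil-1$, Green's conjecture for $C$ (Proposition~\ref{prop-green-conjecture}, available because $p\ge\frac{g+4}{2}$) together with canonical duality gives $\mathrm{K}_{a,1}(C,\omega_C)=0$, so everything comes down to showing $\ker r'=0$.

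The vanishing $\ker r'=0$ is where the real work—and the genericity of $x,y$—lies, and I expect it to be the main obstacle. By Serre duality it is equivalent to surjectivity of the evaluation
\[
H^0\big(\textstyle\bigwedge^{a}M_{\omega_C}^{\vee}\big)\longrightarrow\big(\textstyle\bigwedge^{a}M_{\omega_C}^{\vee}\big)_x\oplus\big(\textstyle\bigwedge^{a}M_{\omega_C}^{\vee}\big)_y .
\]
Now $M_{\omega_C}^{\vee}$ is globally generated, being a quotient of $H^0(\omega_C)^{\vee}\otimes\mathcal{O}_C$, hence so is $\bigwedge^{a}M_{\omega_C}^{\vee}$, whose determinant $\omega_C^{\binom{g-2}{a-1}}$ has positive degree. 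Global generation yields surjectivity at each single point for free; the difficulty is to upgrade this to \emph{joint} surjectivity at the two points. I would settle it by a genericity argument: the locus in $C\times C$ on which the evaluation fails to be surjective is closed, and one shows it is proper using that $C$ and $(x,y)$ are general and that $h^0(\bigwedge^{a}M_{\omega_C}^{\vee})$ is large in the range $a\ge\lceil g/2\rceil$ (equivalently, that $\bigwedge^{a}M_{\omega_C}^{\vee}$ has no trivial summand and separates two general points). Granting $\ker r'=0$, the short exact sequence gives $\mathrm{K}_{a,0}(C,\omega_C(x+y),\omega_C)=0$, and dualizing yields $\mathrm{K}_{i,2}(C,-x-y,\omega_C)=0$ for $i\le\lfloor g/2\rfloor-2$, completing the proof.
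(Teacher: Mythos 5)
Your treatment of the $\mathrm{K}_{i,3}$ statement is complete and correct: after dualizing to weight $-1$, the group becomes the kernel of the comultiplication $\bigwedge^a H^0(\omega_C)\to\bigwedge^{a-1}H^0(\omega_C)\otimes H^0(\omega_C(x+y))$ with $a=g-2-i$, and your contraction argument for injectivity when $a\geq 1$ is valid in every characteristic; the excluded case $a=0$ is exactly $i=g-2$. (You should add a sentence justifying Green--Serre duality on an integral Gorenstein nodal curve, but since $\omega_C$ is the dualizing sheaf, $M_{\omega_C}$ is locally free and Serre duality holds for it, this is routine.) This half is arguably cleaner than the paper's argument and needs no hypothesis on $p$.

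The $\mathrm{K}_{i,2}$ statement, however, is not proved. Your reduction --- duality to $\mathrm{K}_{a,0}(C,\omega_C(x+y),\omega_C)$ with $a\geq\lceil g/2\rceil$, the cokernel presentation through the kernel bundle, the short exact sequence isolating $\ker r'$, and the vanishing of $\mathrm{K}_{a,1}(C,\omega_C)$ from Proposition~\ref{prop-green-conjecture} --- is all correct, but it merely relocates the entire content of the lemma into the assertion $\ker r'=0$, i.e.\ joint surjectivity of the evaluation $H^0\bigl(\bigwedge^a M_{\omega_C}^{\vee}\bigr)\to\bigl(\bigwedge^a M_{\omega_C}^{\vee}\bigr)_x\oplus\bigl(\bigwedge^a M_{\omega_C}^{\vee}\bigr)_y$, and the argument you sketch for that step cannot work as stated. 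Closedness of the bad locus in $C\times C$ buys nothing until you exhibit a single good pair $(x,y)$, and global generation only gives surjectivity at one point: a globally generated bundle $F$ on a curve need not separate two general points (any trivial summand, or more generally $h^0(F)<2\,\mathrm{rk}(F)$, obstructs it), and ``separates two general points'' is literally the statement to be proven --- indeed, by the very duality you invoked, $\ker r'=0$ is \emph{equivalent} to $\mathrm{K}_{i,2}(C,-x-y,\omega_C)=0$, so the proposal is circular at its crucial step. The paper sidesteps all of this: it glues $x$ to $y$ to form the $(m+1)$-nodal canonical curve $D$ of genus $g+1$ and reads the vanishing off the long exact sequence (\ref{eqn-long-exact-1}), which sandwiches $\mathrm{K}_{i,2}(C,-x-y,\omega_C)$ between $\mathrm{K}_{i+1,2}(D,\omega_D)$ --- killed in the range $i\leq\lfloor g/2\rfloor-2$ by Green's conjecture for the general nodal curve $D$ (Proposition~\ref{prop-green-conjecture}) --- and $\mathrm{K}_{i,3}(D,\omega_D)$, which vanishes away from the tail of the resolution; the second claim follows from the same sequence one step further. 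The genericity of $C$ and the bound on $p$ enter only through Green's conjecture for $D$; the surjectivity you leave open is essentially that statement in disguise, so to complete your route you would in effect have to reprove the paper's.
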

\begin{proof}
Similar in (\ref{eqn-long-exact-1}), we have the following segment in a long exact sequence
$$\to \mathrm{K}_{i+1,2}(D,\omega_D) \to \mathrm{K}_{i,2}(C,-x-y, \omega_C) \to \mathrm{K}_{i,3}(D,\omega_D) \to$$
But $ \mathrm{K}_{i,3}(D,\omega_D)=0$ unless $i=g-1$ for the canonical curve $(D,\omega_D)$, whereas $\mathrm{K}_{i+1,2}(D,\omega_D)=0$ for $i \leq \lfloor \frac{g}{2} \rfloor-2$ by Proposition \ref{prop-green-conjecture}. Moreover, from the exact sequence
$$\to \mathrm{K}_{i+1,3}(D,\omega_D) \to \mathrm{K}_{i,3}(C,-x-y, \omega_C) \to \mathrm{K}_{i,4}(D,\omega_D) \to$$
and the vanishings $\mathrm{K}_{i,4}(D,\omega_D)$, valid for all $i$, $ \mathrm{K}_{i+1,3}(D,\omega_D)=0$, valid for $i+1 \neq (g+1)-2$, we see $\mathrm{K}_{i,3}(C,-x-y, \omega_C)=0$.
\end{proof}

The rest of the section addresses an important result on the multiplication map $\gamma_{x,y}$. We have a short exact sequence
\begin{equation} \label{weird-module-T}
0 \to \Gamma_C(-x-y, \omega_C) \to \Gamma_C(\omega_C) \to \mathbb{T}_{x,y}(C) \to 0
\end{equation}
for some graded $S$-module $\mathbb{T}_{x,y}(C)$.

\begin{proposition} \label{spanned-im-gamma}
Let $C$ be a general, integral $m$-nodal curve over $\bbF$ with $\mathrm{char}(\bbF)=p>0$ of arithmetic genus $g\geq 4$ for any $0 \leq m \leq g$. Let $x,y \in C$ be general points in the smooth locus of $C$. Fix $i \leq \lfloor \frac{g-3}{2} \rfloor $. Assume $p\geq \frac{g+4}{2}$.  As the points $x,y \in C$ vary over $C$, the subspaces $\mathrm{Im}(\gamma_{x,y}) \subseteq \mathrm{K}_{i,1}(C, \omega_C)$ span $\mathrm{K}_{i,1}(C, \omega_C)$.
\end{proposition}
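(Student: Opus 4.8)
The plan is to compute the cokernel of each $\gamma_{x,y}$ explicitly and then translate the spanning statement into the vanishing of a common intersection of subspaces. Write $V:=H^0(C,\omega_C)$ and, for a smooth point $x\in C$, let $W_x:=\ker(\mathrm{ev}_x\colon V\to\bbF)$ be the hyperplane of canonical forms vanishing at $x$. First I would feed the short exact sequence (\ref{weird-module-T}) into the long exact sequence of Koszul cohomology: the inclusion $\Gamma_C(-x-y,\omega_C)\hookrightarrow\Gamma_C(\omega_C)$ induces precisely $\gamma_{x,y}$ in bidegree $(i,1)$, and the next term of the sequence is the connecting target $\mathrm{K}_{i-1,2}(C,-x-y,\omega_C)$. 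By Lemma \ref{twisted-van} this group vanishes as soon as $i-1\le\lfloor g/2\rfloor-2$, which holds throughout our range $i\le\lfloor(g-3)/2\rfloor$. Hence the natural map $\Psi_{x,y}\colon\mathrm{K}_{i,1}(C,\omega_C)\to\mathrm{K}_{i,1}(\mathbb{T}_{x,y}(C),V)$ is surjective and identifies $\mathrm{coker}(\gamma_{x,y})\cong\mathrm{K}_{i,1}(\mathbb{T}_{x,y}(C),V)$.

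Next I would evaluate the right-hand side. The evaluation maps at $x$ and $y$ present $\mathbb{T}_{x,y}(C)$ as the kernel of the difference of degree-zero evaluations, yielding a short exact sequence $0\to\mathbb{T}_{x,y}(C)\to R_x\oplus R_y\to\bbF\to0$ with $\bbF$ concentrated in degree zero, where $R_x=S/I_{[x]}$ is the homogeneous coordinate ring of the point $[x]\in\PP^{g-1}$. Since $R_x$ is resolved by the Koszul complex on the $g-1$ linear forms spanning $W_x$, one has $\mathrm{K}_{a,0}(R_x,V)=\bigwedge^a W_x$ and $\mathrm{K}_{a,1}(R_x,V)=0$, while $\mathrm{K}_{a,0}(\bbF,V)=\bigwedge^a V$. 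The long exact sequence of this second sequence then collapses to $\mathrm{K}_{i,1}(\mathbb{T}_{x,y}(C),V)\cong\bigwedge^{i+1}V/\big(\bigwedge^{i+1}W_x+\bigwedge^{i+1}W_y\big)$, a space of dimension $\binom{g-2}{i-1}$.

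Combining the two steps, $\mathrm{Im}(\gamma_{x,y})=\ker\Psi_{x,y}$, with $\Psi_{x,y}$ the explicit surjection just described. Dualizing, the subspaces $\mathrm{Im}(\gamma_{x,y})$ span $\mathrm{K}_{i,1}(C,\omega_C)$ if and only if the annihilators $A_{x,y}:=\mathrm{Im}(\Psi_{x,y}^{\vee})\seq\mathrm{K}_{i,1}(C,\omega_C)^\vee$ satisfy $\bigcap_{x,y}A_{x,y}=0$. Here $A_{x,y}$ is the image under $\Psi_{x,y}^{\vee}$ of $\mathrm{K}_{i,1}(\mathbb{T}_{x,y}(C),V)^\vee$, which is the annihilator $\mathrm{ev}_x\wedge\mathrm{ev}_y\wedge\bigwedge^{i-1}V^\vee\seq\bigwedge^{i+1}V^\vee$ of $\bigwedge^{i+1}W_x+\bigwedge^{i+1}W_y$, of dimension $\binom{g-2}{i-1}$. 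Since these are constructible families of subspaces, by Noetherianity it suffices to produce finitely many pairs whose annihilators already meet only in $0$, so the problem reduces to showing that this family of subspaces genuinely moves as $(x,y)$ ranges over $C\times C$.

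The hard part will be this intersection vanishing. Geometrically, a nonzero functional lying in every $A_{x,y}$ would be a syzygy class compatible with the secant line $\overline{xy}$ for all $x,y$, which should be impossible once $\omega_C$ is very ample and base-point free, so that the covectors $\mathrm{ev}_x\wedge\mathrm{ev}_y$ sweep out $\bigwedge^2 V^\vee$. I would try to make this rigorous either (a) by a direct multilinear-algebra argument, specializing $(x,y)$ to several configurations and intersecting the resulting spaces $\bigwedge^{i+1}W_x+\bigwedge^{i+1}W_y$, using that $C$ is general so that the $W_x$ are in sufficiently general position; or (b) by passing through the Koszul--Serre self-duality of the arithmetically Gorenstein ring of $C$, which identifies $\mathrm{K}_{i,1}(C,\omega_C)^\vee\cong\mathrm{K}_{g-2-i,1}(C,\omega_C)$ and should convert $\bigcap_{x,y}A_{x,y}=0$ into a vanishing statement in the complementary degree of the same type as Green's conjecture, allowing me to invoke Proposition \ref{prop-green-conjecture} together with Lemma \ref{twisted-van}. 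Route (b) is cleaner provided the duality and the identification of $A_{x,y}$ with a projected syzygy space can be justified in characteristic $p$; in either case it is the genericity of $C$ that ultimately prevents a common nonzero class.
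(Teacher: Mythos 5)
Your first step is correct and coincides with the paper's: feeding (\ref{weird-module-T}) into the long exact sequence and using $\mathrm{K}_{i-1,2}(C,-x-y,\omega_C)=0$ from Lemma \ref{twisted-van} identifies $\mathrm{coker}(\gamma_{x,y})$ with $\mathrm{K}_{i,1}(\mathbb{T}_{x,y}(C),H^0(\omega_C))$, and your computation of that group as $\bigwedge^{i+1}V/\big(\bigwedge^{i+1}W_x+\bigwedge^{i+1}W_y\big)$, of dimension $\binom{g-2}{i-1}$, is a correct refinement that the paper does not even need. The gap is that everything after this point is a plan rather than a proof: the statement to be proved is exactly $\bigcap_{x,y}A_{x,y}=0$, and neither of your routes establishes it. Route (a) aims at the wrong object. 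The subspace $A_{x,y}\subseteq\mathrm{K}_{i,1}(C,\omega_C)^{\vee}$ is the image of $\mathrm{Ann}\big(\bigwedge^{i+1}W_x+\bigwedge^{i+1}W_y\big)$ under the injection $\Psi_{x,y}^{\vee}$, and these injections vary with $(x,y)$ in a way governed by the geometry of $C$, not by the position of the hyperplanes $W_x$; there is no single map through which all the $\Psi_{x,y}^{\vee}$ factor (the natural Koszul map $\bigwedge^{i+1}V\to\mathrm{K}_{i,1}(C,\omega_C)$ is zero, its image being boundaries), so showing that the covectors $\mathrm{ev}_x\wedge\mathrm{ev}_y$ sweep out $\bigwedge^2V^{\vee}$ proves an intersection statement inside $\bigwedge^{i+1}V^{\vee}$ that does not transfer to $\mathrm{K}_{i,1}(C,\omega_C)^{\vee}$. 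Route (b) is closer in spirit, but the duality reads $\mathrm{K}_{i,1}(C,\omega_C)^{\vee}\cong\mathrm{K}_{g-2-i,2}(C,\omega_C)$ rather than $\mathrm{K}_{g-2-i,1}$, and the resulting ``no class vanishes on all secant directions'' statement is not a consequence of Proposition \ref{prop-green-conjecture}; it is essentially a restatement of the proposition you are trying to prove.

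The paper closes this gap by a different device, of which you would need some substitute: it shows that \emph{two} general pairs already suffice, i.e.\ $\mathrm{Im}(\gamma_{x,y})+\mathrm{Im}(\gamma_{s,t})=\mathrm{K}_{i,1}(C,\omega_C)$. Since $\mathrm{Im}(\gamma_{x,y})=\ker\Psi_{x,y}$, this amounts to proving that $\Psi_{x,y}$ restricted to $\mathrm{Im}(\gamma_{s,t})$ surjects onto $\mathrm{K}_{i,1}(\mathbb{T}_{x,y}(C),H^0(\omega_C))$. That is achieved by gluing $s$ to $t$ to form a nodal curve $D'$ of genus $g+1$, obtaining a surjection $\mathrm{K}_{i+1,1}(D',\omega_{D'})\twoheadrightarrow\mathrm{K}_{i+1,1}(\mathbb{T}_{x,y}(D'),H^0(\omega_{D'}))$ from Lemma \ref{twisted-van} applied to $D'$ (this is where the bound $i\le\lfloor\frac{g-3}{2}\rfloor$ enters), proving that Aprodu's projection $\mathrm{K}_{i+1,1}(\mathbb{T}_{x,y}(D'),H^0(\omega_{D'}))\to\mathrm{K}_{i,1}(\mathbb{T}_{x,y}(C),H^0(\omega_C))$ is surjective (which requires $\mathrm{K}_{i,2}(\mathbb{T}_{x,y}(C),H^0(\omega_C))=0$, deduced from Proposition \ref{prop-green-conjecture} and Lemma \ref{twisted-van}), and then invoking functoriality of the projection together with Lemma \ref{im-pr-nodal}, which places $\mathrm{pr}(\mathrm{K}_{i+1,1}(D',\omega_{D'}))$ inside $\mathrm{Im}(\gamma_{s,t})$. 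Without this mechanism, or an equivalent one, your argument does not reach the conclusion.
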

\begin{proof}
We follow from \cite[Prop 3.7]{kemeny-geometric-syzygy}. 

\textit{Claim}: $\mathrm{Im}(\gamma_{x,y}) \cup \mathrm{Im}(\gamma_{s,t})$ spans $\mathrm{K}_{i,1}(C, \omega_C)$ if $x,y, s, t \in C$ are general. 

From the long exact sequence of Koszul cohomology associated to the short exact sequence (\ref{weird-module-T}), we have an exact sequence
$$\mathrm{K}_{i,1}(C,-x-y, \omega_C) \xrightarrow{\gamma_{x,y}} \mathrm{K}_{i,1}(C,\omega_C) \xrightarrow{\alpha} \mathrm{K}_{i,1}( \mathbb{T}_{x,y}(C), H^0(\omega_C)) \to 0,$$
since $\mathrm{K}_{i-1,2}(C,-x-y, \omega_C)=0$ by Lemma \ref{twisted-van}. Thus, to prove the claim, it suffices to show
$$\alpha_{|_{ \mathrm{Im}(\gamma_{s,t})}} \; : \;  \mathrm{Im}(\gamma_{s,t}) \to \mathrm{K}_{i,1}( \mathbb{T}_{x,y}(C), H^0(\omega_C))$$
is surjective.

Let $D'$ be the nodal curve of genus $g+1$ obtained by identifying $s$ and $t$, with the partial normalization map $\mu': C \to D'$. We have a short exact sequence
$$0 \to \Gamma_{D'}(-x-y, \omega_{D'}) \to \Gamma_{D'}(\omega_{D'}) \to \mathbb{T}_{x,y}(D') \to 0$$
of $\overline{S}:=\mathrm{Sym} \left(H^0(D',\omega_{D'}) \right)$-modules. By Lemma \ref{twisted-van} we have a surjection
$$\alpha' \; : \; \mathrm{K}_{i+1,1}(D',\omega_{D'}) \twoheadrightarrow \mathrm{K}_{i+1,1}(\mathbb{T}_{x,y}(D'),H^0(\omega_{D'}))$$
since $i \leq \lfloor \frac{g+1}{2}\rfloor-2=\lfloor \frac{g-3}{2} \rfloor$. We have a short exact sequence
$$0 \to H^0(C,\omega_C) \xrightarrow{\mu'_*} H^0(D',\omega_{D}) \to \bbF \to 0,$$
inducing an inclusion $S \hookrightarrow \overline{S}$ of graded ring. By restriction of scalars, we may consider $\mathbb{T}_{x,y}(D')$ as an $S$-module, denoted $(\mathbb{T}_{x,y}(D'))_{S}$. Then we may identify $(\mathbb{T}_{x,y}(D'))_{S}$ with $ \mathbb{T}_{x,y}(C)$. We have a projection map 
$$\mathrm{pr} \, : \, \mathrm{K}_{i+1,1}(\mathbb{T}_{x,y}(D'), H^0(\omega_{D'})) \to \mathrm{K}_{i,1}( \mathbb{T}_{x,y}(C), H^0(\omega_C)),$$
by \cite[\S 2]{aprodu-higher}, which fits into an exact sequence
$$\to \mathrm{K}_{i+1,1}(\mathbb{T}_{x,y}(D'), H^0(\omega_{D'})) \xrightarrow{\mathrm{pr}} \mathrm{K}_{i,1}( \mathbb{T}_{x,y}(C), H^0(\omega_C)) \to \mathrm{K}_{i,2}( \mathbb{T}_{x,y}(C), H^0(\omega_C)) \to \ldots.$$
We have the exact sequence
$$\to \mathrm{K}_{i,2}(C,\omega_C) \to \mathrm{K}_{i,2}(\mathbb{T}_{x,y}(C),H^0(\omega_C)) \to \mathrm{K}_{i-1,3}(C,-x-y, \omega_C) \to \ldots$$
We have $\mathrm{K}_{i,2}(C,\omega_C)=0$ by Proposition \ref{prop-green-conjecture}, whereas $\mathrm{K}_{i-1,3}(C,-x-y, \omega_C)=0$ by Lemma \ref{twisted-van}. Thus $\mathrm{K}_{i,2}(\mathbb{T}_{x,y}(C),H^0(\omega_C))=0$ and we have a \emph{surjective} map
$$\mathrm{pr} \, : \, \mathrm{K}_{i+1,1}(\mathbb{T}_{x,y}(D'), H^0(\omega_{D'})) \twoheadrightarrow \mathrm{K}_{i,1}( \mathbb{T}_{x,y}(C), H^0(\omega_C)).$$
We have a commutative diagram
$$\begin{tikzcd}
\mathrm{K}_{i+1,1}(D',\omega_{D'}) \arrow[r, two heads, "\alpha' "] \arrow[d, two heads, "\mathrm{pr}"] & \mathrm{K}_{i+1,1}(\mathbb{T}_{x,y}(D'), H^0(\omega_{D'})) \arrow[d, two heads, "\mathrm{pr}"]\\
\mathrm{K}_{i,1}(C,\omega_C) \arrow[r, "\alpha"]  & \mathrm{K}_{i,1}( \mathbb{T}_{x,y}(C), H^0(\omega_C)),
\end{tikzcd}$$
by functoriality of projection maps, \cite[\S 2]{aprodu-higher}. Since
$$\mathrm{pr}(\mathrm{K}_{i+1,1}(D',\omega_{D'})) \subseteq \mathrm{Im}(\gamma_{s,t})\subseteq \mathrm{K}_{i,1}(C,\omega_C)$$
by Lemma \ref{im-pr-nodal}, we see that $\alpha_{|_{ \mathrm{Im}(\gamma_{s,t})}}$ is surjective, as required.
\end{proof}

\section{The Geometric Syzygy Conjecture}
\begin{proposition}\label{prop-lin-combo-min-rk}
    Let $D$ be the nodal curve of arithmetic genus $g(D)=g+1$, obtained by identifying two general points $x,y\in C_{\mathrm{sm}}$ on an integral, nodal curve $C$ of genus $g$ with $\omega_C$ very ample and $(C,\omega_C)$ projectively normal. Let 
    $$0\neq \alpha\in \mathrm{K}_{i,1}(D,\omega_D)$$
    be a syzygy of minimal rank $i+1$. Assume $D$ lies in the smooth locus of the syzygy scheme $X_{\alpha}$ and that $h^0(C,\mu^*L_{\alpha})=h^0(D,L_{\alpha})=2$ with $\mathrm{deg}(L_{\alpha})=g(D)-i$. 
     
    Then $\gamma_{x,y}(\delta(\alpha))$ is a linear combination of syzygies $\sigma$ of minimal rank $i$. Moreover, $C$ lies in the smooth locus of the associated syzygy scheme $X_{\sigma}$ and has associated line bundles $L_{\sigma}\simeq \mu^* L_{\alpha}$. 
\end{proposition}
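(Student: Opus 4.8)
The plan is to reduce the claim to an explicit computation with Aprodu's projection applied to a single scrollar syzygy, and then to track the line bundle $L_\alpha$ through the partial normalization $\mu\colon C\to D$. First I would set up the cohomological bookkeeping that isolates $\mu^*L_\alpha$ as the relevant line bundle on $C$. Since $\mu_*\omega_C\simeq\omega_D\otimes I_v$ we have $\mu^*\omega_D\simeq\omega_C(x+y)$, and $\mu^*$ is injective on global sections. Combining this with the hypotheses $h^0(D,L_\alpha)=h^0(C,\mu^*L_\alpha)=2$ and $\deg L_\alpha=g(D)-i=g+1-i$, Riemann--Roch and Serre duality on the Gorenstein curves $C,D$ yield $h^0(\omega_D\otimes L_\alpha^\vee)=i+1$, $h^0(\omega_C(x+y)\otimes(\mu^*L_\alpha)^\vee)=i+1$ and $h^0(\omega_C\otimes(\mu^*L_\alpha)^\vee)=i$, the last two differing by the single section supported at $v$ (equivalently $h^0(\mu^*L_\alpha(-x-y))=h^0(L_\alpha(-v))=1$, as $L_\alpha$ is a base point free pencil). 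Thus $\mu^*L_\alpha$ has exactly the numerics of a line bundle producing minimal rank $i$ syzygies in $\mathrm{K}_{i-1,1}(C,\omega_C)$, and the inclusion $H^0(\omega_C\otimes(\mu^*L_\alpha)^\vee)\hookrightarrow H^0(\omega_C(x+y)\otimes(\mu^*L_\alpha)^\vee)$ has codimension one.

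Using the identification (\ref{eqn-iso-1}), I would next rewrite $\alpha$ as a minimal rank syzygy for the pencil $\mu^*L_\alpha$ on the polarized curve $(C,\omega_C(x+y))$, namely $\alpha=\delta_s(\omega_0\otimes\bar t)$ where $\omega_0$ spans the line $\bigwedge^{i+1}H^0(\omega_C(x+y)\otimes(\mu^*L_\alpha)^\vee)$. By the diagram (\ref{eqn-cd-1}), computing $\gamma_{x,y}(\delta(\alpha))$ is the same as computing the Aprodu projection $\mathrm{pr}(\alpha)$ inside $\mathrm{K}_{i-1,1}(C,\omega_C)$. The heart of the argument is to show, at the level of the Koszul complexes computing these groups, that the connecting map $\delta$ of (\ref{eqn-long-exact-1}) acts on $\omega_0$ by contraction against the functional $\ell\in H^0(\omega_C(x+y)\otimes(\mu^*L_\alpha)^\vee)^\vee$ cutting out $H^0(\omega_C\otimes(\mu^*L_\alpha)^\vee)$; this contraction carries the generator of the top exterior power to the generator of $\bigwedge^iH^0(\omega_C\otimes(\mu^*L_\alpha)^\vee)$, so that $\delta(\alpha)$ is a twisted scrollar syzygy for $\mu^*L_\alpha$ in $\mathrm{K}_{i-1,1}(C,-x-y;\omega_C)$. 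Applying $\gamma_{x,y}$, i.e.\ multiplying by the fixed section of $\mathcal O_C(x+y)$, then realizes it inside the span of the minimal rank $i$ scrollar syzygies attached to $\mu^*L_\alpha$. Since for a fixed pencil these syzygies span only the (at most two-dimensional) image of $\bigwedge^iH^0(\omega_C\otimes(\mu^*L_\alpha)^\vee)\otimes H^0(\mu^*L_\alpha)$, I conclude that $\gamma_{x,y}(\delta(\alpha))$ is a linear combination of minimal rank $i$ syzygies $\sigma$, each with $L_\sigma\simeq\mu^*L_\alpha$.

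For the syzygy scheme, I would identify $X_\sigma$ with the rational normal scroll swept out by the spans of the divisors of $\mu^*L_\alpha$ on $C\subseteq\PP^{g-1}$, and $X_\alpha$ with the corresponding scroll of $L_\alpha$ on $D\subseteq\PP^g$. As $\deg$ and $h^0$ of the two pencils agree, both scrolls have rulings of one and the same dimension and total dimension $g-i$, and the projection $\pi_v$ from the node carries $X_\alpha$ birationally onto $X_\sigma$ with $\pi_v(D)=C$. Because $D$, and in particular $v$, lies in the smooth locus of $X_\alpha$ by hypothesis, working on the blow-up $\widetilde Z\simeq\PP(\mathcal O\oplus\omega_C)$ introduced above, where the strict transform $D'$ is isomorphic to $C$, shows that $\pi_v$ is an isomorphism near $D'$ and hence that $C=\pi_v(D)$ meets $X_\sigma$ only in its smooth locus.

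The main obstacle is the explicit identification in the second paragraph: matching the abstract connecting homomorphism $\delta$ with the concrete contraction on the exterior algebra that converts the scrollar syzygy of $(C,\omega_C(x+y))$ into that of $(C,\omega_C)$, and checking that no higher rank terms are introduced along the way. Controlling this rests on the vanishing theorems already established, in particular the twisted vanishing of Lemma \ref{twisted-van} and the Green-type vanishing of Proposition \ref{prop-green-conjecture}, which force the relevant error terms in (\ref{eqn-long-exact-1}) to vanish, so that $\delta(\alpha)$ lands in the twisted scrollar syzygy space and $\gamma_{x,y}$ realizes it as a combination of minimal rank $i$ syzygies. The smoothness bookkeeping at the node is a secondary point, handled entirely on $\widetilde Z$.
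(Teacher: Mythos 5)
Your numerics in the first paragraph are correct and consistent with what the paper needs ($h^0(\omega_D\otimes L_\alpha^\vee)=i+1$, $h^0(\omega_C\otimes(\mu^*L_\alpha)^\vee)=i$), but the argument has a genuine gap exactly where you flag it: the claim that the connecting map $\delta$ of (\ref{eqn-long-exact-1}) sends the scrollar syzygy $\alpha=\delta_s(\omega_0\otimes\bar t)$ to a contraction of $\omega_0$ landing in the one-dimensional twisted scrollar space for $\mu^*L_\alpha$ is asserted, not proved, and the subsequent sentence ("Applying $\gamma_{x,y}$ \dots then realizes it inside the span of the minimal rank $i$ scrollar syzygies attached to $\mu^*L_\alpha$") is the conclusion of the proposition restated rather than a deduction. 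The vanishings you invoke (Lemma \ref{twisted-van} and Proposition \ref{prop-green-conjecture}) control exactness of the long exact sequence — e.g.\ surjectivity of $\delta$ — but they say nothing about \emph{which} element of $\mathrm{K}_{i-1,1}(C,-x-y;\omega_C)$ the class $\delta(\alpha)$ is, so they cannot rule out that $\gamma_{x,y}(\delta(\alpha))$ has components outside the scrollar subspace. Without an actual computation of $\delta$ on a cocycle representative (say via the Eagon--Northcott resolution of the scroll), the heart of the proof is missing. A secondary gap: you identify $X_\sigma$ with the scroll swept out by the divisors of $\mu^*L_\alpha$ by fiat; the syzygy scheme of a rank-$i$ element of $\mathrm{K}_{i-1,1}$ must be \emph{shown} to coincide with that scroll, which requires knowing it contains a variety of minimal degree $i$ and codimension $i-1$.

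The paper circumvents the explicit computation entirely by working one level up, on the syzygy scheme. Since $X_\alpha=\mathrm{Syz}(\alpha)$, the syzygy $\alpha$ is by definition the restriction $r_{D'}(\beta)$ of a linear syzygy $\beta$ of the scroll $X_\alpha$ (viewed on its desingularization $\widetilde{Y_\alpha}$), and the pair $(\delta,\gamma_{x,y})$ on the curve is the restriction of the analogous pair $(\Delta,\gamma_{s})$ on $\widetilde{Y_\alpha}$ via a commutative square built from (\ref{eqn-iso-1}) and (\ref{eqn-iso-2}). Hence $\gamma_{x,y}(\delta(\alpha))=r_{\widetilde Z}(\gamma_s(\Delta(\beta)))$ lies in the image of $\mathrm{K}_{i-1,1}(\widetilde{Y_\alpha},\cO_{\widetilde{Y_\alpha}}(1))$, i.e.\ in the restriction of the linear syzygies of the projected scroll $\overline{\pi_v(X_\alpha)}$; these are all spanned by minimal-rank elements (Eagon--Northcott), restriction preserves rank, and the containment $\overline{\pi_v(X_\alpha)}\subseteq\mathrm{Syz}(\sigma)$ forces equality by a degree and codimension count, giving the statements about $X_\sigma$, the smooth locus, and $L_\sigma\simeq\mu^*L_\alpha$ all at once. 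If you want to salvage your route, you would need to supply the cocycle-level identification of $\delta$ on scrollar classes; otherwise the functorial reduction to the scroll is the cleaner and complete argument.
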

\begin{proof}
    We follow from \cite[\S 3]{kemeny-geometric-syzygy}. Let $Y_{\alpha}\supseteq X_{\alpha}$ be the cone over $\overline{\pi_v(X_{\alpha})}$. Let $\mu_{\alpha}:\widetilde{Y_{\alpha}} \to Y_{\alpha}$ be the desingularization of $Y_{\alpha}$ by scroll $\widetilde{Y_{\alpha}}$ and $X'_{\alpha}$ be the strict transform of $X_{\alpha}$. We have natural isomorphisms $\mathrm{K}_{i,1}(X'_{\alpha},\cO_{X'_{\alpha}}(1))\simeq \mathrm{K}_{i,1}(X_{\alpha},\cO_{X_{\alpha}}(1))$. Moreover, as $X'_{\alpha}$ being a divisor of $\widetilde{Y_{\alpha}}$, we have $\cO_{\widetilde{Y_{\alpha}}}(X'_{\alpha})\simeq \cO_{\widetilde{Y_{\alpha}}}(\cR)\otimes \cO_{\widetilde{Y_{\alpha}}}(1)$, where $\cO_{\widetilde{Y_{\alpha}}}(1)$ is the hyperplane class, and $R$ is the ruling class on the scroll $\widetilde{Y_{\alpha}}$. There exists a long exact sequence
    $$0 \to \mathrm{K}_{i,1}(\widetilde{Y_{\alpha}},\cO_{\widetilde{Y_{\alpha}}}(1)) \to \mathrm{K}_{i,1}(X'_{\alpha}, \cO_{X'_{\alpha}}(1)) \xrightarrow{\Delta} \mathrm{K}_{i-1,1}(\widetilde{Y_{\alpha}},-\cR; \cO_{\widetilde{Y_{\alpha}}}(1)) \to \mathrm{K}_{i-1,2}(\widetilde{Y},\cO_{\widetilde{Y_{\alpha}}}(1))\to \cdots$$
    Let $s\in H^0(\cO_{\widetilde{Y_{\alpha}}}(\cR))$ be a section such that the image of the ruling $Z(s)$ under $\mu_{\alpha}$ passes through $v$. Then multiplication by $s$ induces a morphism:
    $$\gamma_s: \mathrm{K}_{i,1}(\widetilde{Y_{\alpha}},-\cR; \cO_{\widetilde{Y_{\alpha}}}(1)) \to \mathrm{K}_{i,1}(\widetilde{Y_{\alpha}}, \cO_{\widetilde{Y_{\alpha}}}(1))$$
    As $\widetilde{Z}=\mathrm{Bl}_v\,Z$ naturally lies in $\mathrm{Bl}_v\,Y_{\alpha}$ with a natural birational morphism $g:\mathrm{Bl}_v\,Y_{\alpha} \to \widetilde{Y_{\alpha}}$, we have $g^*(s)|_{D'}\in H^0(C,\mu^*L_{\alpha})$, where $(C\simeq )D'\subseteq \widetilde{Z}$, is the strict transform of $D$. 
    
    Moreover, using isomorphisms (\ref{eqn-iso-1}) and (\ref{eqn-iso-2}),  we have a commutative diagram
    \[\begin{tikzcd}
    \mathrm{K}_{i,1}(X_{\alpha}',\cO_{X'_{\alpha}}(1)) \arrow[r, "\gamma_s\circ\Delta"] \arrow[d, "r_{D'}"] & \mathrm{K}_{i-1,1}(\widetilde{Y_{\alpha}},\cO_{\widetilde{Y_{\alpha}}}(1))  \arrow[d, "r_{\widetilde{Z}}"]  \\
    \mathrm{K}_{i,1}(D,\omega_D) \arrow[r, "\gamma_{x,y}\circ\delta"]     & \mathrm{K}_{i-1,1}(C,\omega_C)                   
    \end{tikzcd}
    \]
    where $r_{D'}$ (respectively $r_{\widetilde{Z}}$) is the restriction to $D'$ (respectively $\widetilde{Z}$). 

    By definition of the syzygy scheme $\mathrm{Syz}(\alpha)=X_{\alpha}$, there exists $\beta\in \mathrm{K}_{i,1}(X_{\alpha},\cO_{X_{\alpha}}(1))\simeq \mathrm{K}_{i,1}(X'_{\alpha},\cO_{X'_{\alpha}}(1))$ with $r_{D'}(\beta)=\alpha$. Then $\gamma_{x,y}(\delta(\alpha))\in\mathrm{Im}(r_{\widetilde{Z}})$. As the linear syzygy of a scroll is generated by minimal rank, and restriction $r_{\widetilde{Z}}$ does not change the rank, $\gamma_{x,y}(\delta(\alpha))$ is a linear combination of syzygies $\sigma:=r_{\widetilde{Z}}(\sigma')$ of rank $i$. Under the isomorphism
    $$\mathrm{K}_{i-1,1}(\widetilde{Y_{\alpha}},\cO_{\widetilde{Y_{\alpha}}}(1))\simeq \mathrm{K}_{i-1,1}(Y_{\alpha},\cO_{Y_{\alpha}}(1))\simeq \mathrm{K}_{i-1,1}(\overline{\pi_v(X_{\alpha}}),\cO_{\overline{\pi_v(X_{\alpha}}}(1))$$
    $r_{\widetilde{Z}}$ becomes a natural inclusion
    $$\mathrm{K}_{i-1,1}(\overline{\pi_v(X_{\alpha}}),\cO_{\overline{\pi_v(X_{\alpha}}}(1)) \hookrightarrow \mathrm{K}_{i-1,1}(C,\omega_C).$$
    Thus $\overline{\pi_v(X_{\alpha})}\subseteq \mathrm{Syz}(\sigma)$ by definition of the syzygy scheme, which is a rational normal scroll of degree $i$ and codimension $i-1$. Therefore, $\mathrm{Syz}(\sigma)\simeq \overline{\pi_v(X_{\alpha})}$ as desired.
\end{proof}

\begin{proposition} \label{induction-step}
Let $C$ be a general, integral $m$-nodal curve over $\bbF$ with $\mathrm{char}(\bbF)=p>0$ of arithmetic genus $g\geq 4$ for any $0 \leq m \leq g$. Assume $p\geq\frac{g+4}{2}$. Let $x,y \in C_{sm}$ be general and let $D$ be the nodal curve obtained by identifying $x$ and $y$. Let $i \leq \lfloor \frac{g-1}{2} \rfloor$. Suppose $\mathrm{K}_{i,1}(D,\omega_D)$ is spanned by syzygies $\alpha$ satisfying the following properties: 
\begin{enumerate}[\normalfont(i)]
\item $\alpha$ has minimal rank $i+1$.
\item The curve $D$ lies in the smooth locus of the syzygy scheme $X_{\alpha}:=\mathrm{Syz}(\alpha)$.
\item The associated line bundle $L_{\alpha}:=\mathcal{O}_D(R)$ satisfies $\deg(L_{\alpha})=g+1-i$ and $h^0(C,\mu^*L_{\alpha})=2$, where $\mu: C \to D$ is the partial normalization map.
\end{enumerate}
Then $\mathrm{K}_{i-1,1}(C,\omega_C)$ is spanned by syzygies $\sigma$ of rank $i$ such that $X_{\sigma}:=\mathrm{Syz}(\sigma)$ is a scroll with $C$ lying in the smooth locus of $X_{\sigma}$ and with the associated line bundle of the form $L_{\sigma}=\mu^*L_{\alpha}$, where $L_{\alpha}$ is associated to a syzygy $\alpha \in \mathrm{K}_{i,1}(D,\omega_D)$ of rank $i+1$.
\end{proposition}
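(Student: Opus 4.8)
The plan is to transport the spanning hypothesis on $D$ down to $C$ through the composite $\gamma_{x,y}\circ\delta$ and then let the two glued points vary. Fixing general $x,y\in C_{\mathrm{sm}}$, I would consider
$$\gamma_{x,y}\circ\delta \, : \, \mathrm{K}_{i,1}(D,\omega_D)\longrightarrow \mathrm{K}_{i-1,1}(C,\omega_C),$$
where $\delta$ is the connecting map of (\ref{eqn-long-exact-1}). By hypothesis $\mathrm{K}_{i,1}(D,\omega_D)$ is spanned by syzygies $\alpha$ satisfying (i)--(iii), and these are precisely the hypotheses of Proposition \ref{prop-lin-combo-min-rk}. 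That proposition then shows each $\gamma_{x,y}(\delta(\alpha))$ is a linear combination of rank-$i$ syzygies $\sigma$ whose syzygy scheme $X_\sigma$ is a scroll containing $C$ in its smooth locus, with $L_\sigma\simeq\mu^*L_\alpha$. Thus $\mathrm{Im}(\gamma_{x,y}\circ\delta)$ lies in the span of exactly the syzygies we wish to produce.

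The crucial step is to recognize this image as all of $\mathrm{Im}(\gamma_{x,y})$. The long exact sequence (\ref{eqn-long-exact-1}) shows that $\mathrm{coker}(\delta)$ injects into $\mathrm{K}_{i-1,2}(C,\omega_C)$. Since $i\le\lfloor\tfrac{g-1}{2}\rfloor$ forces $i-1\le\lfloor\tfrac{g}{2}\rfloor-1$, Proposition \ref{prop-green-conjecture} (Green's conjecture for general nodal curves, applicable as $p\ge\tfrac{g+4}{2}$) yields $\mathrm{K}_{i-1,2}(C,\omega_C)=0$. Hence $\delta$ is surjective and $\mathrm{Im}(\gamma_{x,y}\circ\delta)=\mathrm{Im}(\gamma_{x,y})$. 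Because the $\alpha$ span $\mathrm{K}_{i,1}(D,\omega_D)$, the elements $\gamma_{x,y}(\delta(\alpha))$ span $\mathrm{Im}(\gamma_{x,y})$, so $\mathrm{Im}(\gamma_{x,y})$ is contained in the span of the good rank-$i$ syzygies $\sigma$.

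I would then let $x,y$ vary. Applying Proposition \ref{spanned-im-gamma} with index $i-1$ --- permissible because $i\le\lfloor\tfrac{g-1}{2}\rfloor$ is equivalent to $i-1\le\lfloor\tfrac{g-3}{2}\rfloor$ --- the subspaces $\mathrm{Im}(\gamma_{x,y})$ span $\mathrm{K}_{i-1,1}(C,\omega_C)$ as $(x,y)$ ranges over general points of $C_{\mathrm{sm}}$. Combined with the previous step, $\mathrm{K}_{i-1,1}(C,\omega_C)$ is spanned by rank-$i$ syzygies $\sigma$ with $X_\sigma$ a scroll, $C$ in its smooth locus, and $L_\sigma=\mu^*L_\alpha$ for a suitable rank-$(i+1)$ syzygy $\alpha$, which is exactly the asserted conclusion.

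I expect the main obstacle to be the interaction of the two genericity statements: the hypothesis is phrased for a single general pair $(x,y)$, whereas Proposition \ref{spanned-im-gamma} requires the $\gamma_{x,y}$ to span as $(x,y)$ moves, so one must verify that the locus of pairs satisfying (i)--(iii) is dense enough to meet the general locus used there. The one genuinely new structural input is the surjectivity of $\delta$, which rests entirely on the vanishing $\mathrm{K}_{i-1,2}(C,\omega_C)=0$; once these two points are granted, the conclusion is a formal assembly of Propositions \ref{prop-lin-combo-min-rk} and \ref{spanned-im-gamma}.
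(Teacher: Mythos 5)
Your proposal is correct and follows essentially the same route as the paper: reduce to spanning each $\mathrm{Im}(\gamma_{x,y})$ via Proposition \ref{spanned-im-gamma}, obtain surjectivity of $\delta$ from the vanishing $\mathrm{K}_{i-1,2}(C,\omega_C)=0$ of Proposition \ref{prop-green-conjecture}, and conclude with Proposition \ref{prop-lin-combo-min-rk}. The index checks you perform are the right ones, and the concern you raise about genericity of $(x,y)$ is handled in the paper simply by taking the pairs in the hypothesis to be the same general pairs used in Proposition \ref{spanned-im-gamma}.
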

\begin{proof}
    We follow from \cite[Prop 4.1]{kemeny-geometric-syzygy}. By Proposition \ref{spanned-im-gamma}, $\mathrm{K}_{i-1,1}(C,\omega_C)$ is spanned by the subspaces $\mathrm{Im}(\gamma_{x,y}) \subseteq \mathrm{K}_{i-1,1}(C, \omega_C)$ as $x, y \in C_{sm}$ vary. Thus it suffices to show $\mathrm{Im}(\gamma_{x,y})$ is spanned by syzygies $\sigma$ of rank $i$ and with the required properties. We have $\mathrm{K}_{i-1,2}(C,\omega_C)=0$ by Proposition \ref{prop-green-conjecture}, and thus a surjection
$$\delta \; : \; \mathrm{K}_{i,1}(D,\omega_D) \twoheadrightarrow   \mathrm{K}_{i-1,1}(C,-x-y, \omega_C).$$
Since $\mathrm{K}_{i,1}(D,\omega_D)$ is spanned by syzygies $\alpha$ satisfying the properties (i),(ii),(iii), it suffices to show that, for such a syzygy $\alpha$, the syzygy $\gamma_{x,y}(\delta(\alpha))$ is a linear combination of syzygies of rank $i$ with the required properties. This follows from Proposition \ref{prop-lin-combo-min-rk}.
\end{proof}

\begin{proposition} \label{last-ruling-prop}
Let $D$ be an integral $n$-nodal curve of even arithmetic genus $g=2k$ with $\omega_D$ very ample and $(D,\omega_D)$ normally generated. Let $A \in W^1_{k+1}(D)$ be a base-point free line bundle with $h^0(A)=2$. Let $\alpha \neq 0 \in \mathrm{K}_{k-1,1}(D,\omega_D; H^0(\omega_D \otimes A^{-1}))$ for $s \neq 0 \in H^0(A)$. Then the associated syzygy scheme $X_{\alpha}$ is a scroll and $D$ lies in the smooth locus of $X_{\alpha}$, with associated line bundle
$$A \simeq L_{\alpha}:=\mathcal{O}_D(R),$$
where $R$ is the ruling on $X_{\alpha}$.
\end{proposition}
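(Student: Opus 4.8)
The plan is to prove all three assertions at once by explicitly constructing the rational normal scroll attached to the pencil $A$ and then identifying it with the syzygy scheme $X_{\alpha}$. First I would record that $\alpha$ has minimal rank. By Riemann--Roch on the Gorenstein curve $D$, from $h^0(A)=2$, $\deg A=k+1$ and $g=2k$ we obtain $h^0(\omega_D\otimes A^{-1})=k$, so $\bigwedge^{k}H^0(\omega_D\otimes A^{-1})\otimes\big(H^0(A)/\mathbb{F}\langle s\rangle\big)$ is one-dimensional, and by the explicit construction of minimal-rank syzygies via the map $\delta_s$ (\cite[Prop.\ 1.9]{kemeny-geometric-syzygy}) the class $\alpha$ is a generator of it. Since $\alpha$ lies in $\mathrm{K}_{k-1,1}(D,\omega_D;H^0(\omega_D\otimes A^{-1}))$, its rank is at most $h^0(\omega_D\otimes A^{-1})=k$; as the rank of any nonzero class in $\mathrm{K}_{k-1,1}$ is at least $k$, the rank is exactly $k=(k-1)+1$, i.e.\ minimal. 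Lemma \ref{lemma-linear-syzygy} guarantees $\alpha$ stays nonzero in $\mathrm{K}_{k-1,1}(D,\omega_D)$, so this rank is genuinely computed.

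Next I would build the scroll. The base-point-free pencil $A$ gives a finite morphism $\phi_A\colon D\to\PP^1$ of degree $k+1$, and each fibre $D_t:=\phi_A^{-1}(t)\in|A|$ imposes exactly $g-h^0(\omega_D\otimes A^{-1})=k$ conditions on $H^0(\omega_D)$, hence spans a $\PP^{k-1}$ in $\PP^{g-1}$. As $t$ varies, the union $X:=\bigcup_{t\in\PP^1}\overline{\langle D_t\rangle}$ is the classical rational normal scroll $S(A)$ of dimension $k$ and minimal degree $g-(k+1)+1=k$ (\cite[\S 3.2]{aprodu-nagel}, \cite{schreyer}); it contains $D$, and the ruling class $R$, namely the class of the $\PP^{k-1}$-fibres, satisfies $\mathcal{O}_X(R)|_D=\mathcal{O}_D(D_t)=A$. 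Writing $X$ as the image of $\PP(\mathcal{E})\to\PP^{g-1}$ for the associated rank-$k$ bundle $\mathcal{E}$ on $\PP^1$, the curve $D$ lifts to $\PP(\mathcal{E})$, and base-point-freeness of $A$ forces this lift to avoid the locus contracted to the vertex; hence $D$ lands in the smooth locus $X_{\mathrm{sm}}$, nodes included.

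Finally I would identify $X=X_{\alpha}$. The scroll $X$ is cut out by the $2\times 2$ minors of a $2\times k$ matrix of linear forms, and its Eagon--Northcott resolution has length $k-1$ with one-dimensional top linear strand $\mathrm{K}_{k-1,1}(X,\mathcal{O}_X(1))$; restriction along $D\subseteq X$, using $\mathcal{O}_X(1)|_D=\omega_D$, carries a generator of this space to the generator $\alpha$ of $\mathrm{K}_{k-1,1}(D,\omega_D;H^0(\omega_D\otimes A^{-1}))$, giving $X\subseteq X_{\alpha}$. For the reverse containment I would invoke von Bothmer's structure theory of minimal-rank syzygies (\cite{bothmer-JPAA}, \cite[\S 3.2]{aprodu-nagel}, as used in \cite{kemeny-geometric-syzygy}): the quadrics occurring in a rank-$k$ syzygy of $\mathrm{K}_{k-1,1}$ are precisely the $2\times 2$ minors above, whose common zero locus is $X$, whence $X_{\alpha}=X$. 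This yields simultaneously that $X_{\alpha}$ is a scroll, that $D\subseteq(X_{\alpha})_{\mathrm{sm}}$, and that $L_{\alpha}=\mathcal{O}_{X_{\alpha}}(R)\cong A$.

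The hard part is the last identification in positive characteristic. The Eagon--Northcott complex is characteristic-free, so the construction of the scroll, the smoothness claim, and the lower bound $X\subseteq X_{\alpha}$ are unproblematic; the delicate point is that the determinantal/minimal-degree description underlying $X_{\alpha}\subseteq X$ survives, together with the exactness of the relevant exterior-power sequences, which requires $k-1\le p-1$. The ambient hypothesis $p\geq 2g-4=4k-4$ comfortably secures $p\ge k$, but one must verify that von Bothmer's classification of the syzygy scheme of the explicit minimal-rank syzygy rests on the determinantal computation rather than on a characteristic-zero input; checking that the syzygy scheme is defined by exactly these minors is the step that needs the most care.
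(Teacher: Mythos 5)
The paper states Proposition \ref{last-ruling-prop} with no proof at all --- it is given bare, implicitly deferring to the corresponding result of \cite{kemeny-geometric-syzygy} --- so there is no in-text argument to compare yours against. Your reconstruction is the standard argument and almost certainly the one the authors intend: compute $h^0(\omega_D\otimes A^{-1})=k$, get minimal rank from Lemma \ref{lemma-linear-syzygy} together with the two trivial bounds on rank, sweep out the scroll from the spans of the divisors in $|A|$, and identify it with $\mathrm{Syz}(\alpha)$ by matching linear syzygies in both directions. Two steps, however, are asserted rather than proved. First, the containment $X\subseteq X_\alpha$ requires that $\alpha$ be, up to scalar, the restriction of the generator of the one-dimensional space $\mathrm{K}_{k-1,1}(X,\mathcal{O}_X(1))$; for an \emph{arbitrary} nonzero $\alpha$ in $\mathrm{K}_{k-1,1}(D,\omega_D;H^0(\omega_D\otimes A^{-1}))$ this needs that Koszul group itself to be one-dimensional (equivalently, that $\delta_s$ is an isomorphism onto it), which does not follow formally from the rank computation --- it is a separate kernel-bundle computation for the pencil $A$ carried out in \cite[\S 2]{kemeny-geometric-syzygy}, using $h^0(A)=2$ and base-point-freeness. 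Second, the reverse inclusion $X_\alpha\subseteq X$, which is the actual content of the proposition, is delegated to the classification of minimal-rank syzygies (\cite{bothmer-JPAA}, \cite[\S 3.2]{aprodu-nagel}); you correctly identify that its characteristic-$p$ validity is the delicate point, and since the Eagon--Northcott and determinantal descriptions are characteristic-free and the exterior-power sequences only need $k-1\leq p-1$ (amply satisfied where the proposition is applied), this can be closed, but as written you flag it rather than close it. The smooth-locus claim is likewise only sketched. None of this is a wrong turn, but the proof is complete only once these points are filled in.
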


\begin{theorem}
    Let $C$ be a general curve of genus $\geq 4$ over $\bbF$ with $\mathrm{char}(\bbF)=p>0$. Fix $i\geq 1$, then $\mathrm{K}_{i,1}(C,\omega_C)$ is spanned by syzygies of rank $i+1$, provided $\mathrm{char}(\bbF)=p>2g-2i-2$.
\end{theorem}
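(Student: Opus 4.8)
The plan is to establish the theorem by descending induction on the genus, reducing the general case to the last linear syzygy space of a curve of even genus, where the Brill--Noether map $\Delta_{C,k-1}$ of Section \ref{label-mod-spaces} provides direct control. I would first discard the trivial range: for a general curve one has $\mathrm{K}_{i,1}(C,\omega_C)=0$ as soon as $i \geq \lfloor g/2\rfloor$, by Green's conjecture in the form of Proposition \ref{prop-green-conjecture} together with the termination of the linear strand, so the assertion is vacuous there. Thus I may assume $1 \leq i \leq \lfloor g/2\rfloor - 1$, which forces $g \geq 2i+2$. Setting $k := g-i-1 \geq 2$, the base of the induction is the last linear syzygy space $\mathrm{K}_{k-1,1}$ of a curve of even genus $2k = 2g-2i-2$; the point of the hypothesis is that $p > 2g-2i-2$ is precisely the inequality $p > 2k$ needed to run all the constructions of Section \ref{label-mod-spaces} in genus $2k$.

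For the base case I would take the integral rational nodal curve $C_0$ of genus $2k$ furnished by Proposition \ref{prop-nodal-global-injection}, for which $\Delta^*_{C_0,k-1}$ is injective. As recalled after Lemma \ref{BN finite}, injectivity of $\Delta^*_{C_0,k-1}$ forces the image of $\Delta_{C_0,k-1}$ to avoid every hyperplane, so the minimal-rank syzygies attached to the pencils $A \in W^1_{k+1}(C_0)$ span $\mathrm{K}_{k-1,1}(C_0,\omega_{C_0})$. By Proposition \ref{last-ruling-prop} each such syzygy has a scroll for its syzygy scheme, with $C_0$ in the smooth locus and associated line bundle $L\simeq A$ of degree $k+1 = 2k-(k-1)$ and $h^0=2$, so it satisfies hypotheses (i)--(iii) of Proposition \ref{induction-step}. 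Using Proposition \ref{prop-geometric-nodal} to spread the injectivity of $\Delta^*$ (and the vanishing $\mathrm{K}_{k,1}=0$) through a flat family in which $C_0$ is a maximal degeneration, I obtain the same spanning statement for the \emph{general} $(2k-g)$-nodal curve of genus $2k$, whose normalization is a general curve of genus $g$.

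I would then run the induction with Proposition \ref{induction-step}, taking $C$ to be a general $m$-nodal curve and $D$ the $(m+1)$-nodal curve obtained by gluing two general points; since gluing two general points of a general $m$-nodal curve produces a general $(m+1)$-nodal curve, the genericity is preserved at each stage. Starting from the general $(2k-g)$-nodal curve of genus $2k$ at index $k-1$ and descending one node, and hence one unit of genus and of index, at a time, after the $g-2i-2$ required steps I reach the general smooth curve of genus $g$ at index $i$. At each intermediate genus $g'$ the conclusion of Proposition \ref{induction-step}, combined again with Proposition \ref{last-ruling-prop}, returns a spanning family of minimal-rank syzygies whose syzygy schemes are scrolls with associated pencil $\mu^*L$ of the correct degree $g'-(\text{index})$ and with two sections, which is exactly the input required for the next step; along the way one checks that the index condition $i' \leq \lfloor (g'-1)/2\rfloor$ of Proposition \ref{induction-step} holds at each stage and that the characteristic bound $p \geq \frac{g'+4}{2}$ demanded at every genus $g' \leq 2k$ follows from $p > 2k$.

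The step I expect to be the main obstacle is the propagation of hypothesis (iii) across the induction, namely keeping $h^0 = 2$ for the pulled-back pencil $\mu^* L$ after each further normalization, so that it remains a base-point-free $g^1_d$ on the smaller curve and the syzygy scheme stays a genuine scroll of the expected dimension. This is exactly where the Brill--Noether genericity of Lemma \ref{gen-node-BN} must be invoked, to guarantee that the relevant line bundles neither acquire base points nor jump in cohomology as nodes are smoothed. Once this bookkeeping is secured, the chain of implications from the rational curve $C_0$ down to the general smooth curve of genus $g$ yields the asserted spanning of $\mathrm{K}_{i,1}(C,\omega_C)$ by syzygies of minimal rank $i+1$.
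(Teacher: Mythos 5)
Your overall strategy coincides with the paper's: discard the range where Green's conjecture makes the statement vacuous, glue $m=g-2i-2$ general pairs of points to pass to a general $m$-nodal curve $D_m$ of even arithmetic genus $2\ell=2g-2i-2$, establish spanning of the last linear syzygy space $\mathrm{K}_{\ell-1,1}(D_m,\omega_{D_m})$ by minimal-rank syzygies via the injectivity of $\Delta^*$ (Propositions \ref{prop-nodal-global-injection} and \ref{prop-geometric-nodal}) together with Proposition \ref{last-ruling-prop}, and then descend one node at a time using Proposition \ref{induction-step}. Your bookkeeping of the index condition and of the characteristic bound is correct.

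There is, however, a genuine gap at exactly the step you yourself flag as the main obstacle: verifying hypothesis (iii), i.e.\ that $h^0(C,\mu^*A)=2$ for every $A\in W^1_{\ell+1}(D_m)$, where $\mu:C\to D_m$ is the normalization. You propose to deduce this from ``the Brill--Noether genericity of Lemma \ref{gen-node-BN}'', but that lemma only describes the loci $W^1_{\ell+1}(D_m)$ and $W^2_{\ell+1}(D_m)$ of sheaves on the \emph{nodal} curve itself; it says nothing about how $h^0$ behaves under pullback along the normalization. One always has $h^0(C,\mu^*A)\geq h^0(D_m,A)=2$, and the danger is a jump to $3$; a crude Brill--Noether count does not exclude this, since $\rho(g,2,\ell+1)$ need not be negative in the relevant range, so $W^2_{\ell+1}(C)$ can well be nonempty on the general curve $C$. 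The paper closes this with a separate incidence argument on the normalization: $G^1_{\ell+1}(C)$ is smooth of dimension $\rho(g,1,\ell+1)=m$ and irreducible for $m>0$ (\cite[XXI, Prop.\ 6.8]{ACGH2}, \cite{fulton-laz-connectedness}); the locus $\mathcal{Z}$ of pencils contained in a line bundle with $h^0\geq 3$ has codimension at least one (\cite[IV, Lemma 3.5]{ACGH2}); and requiring the pencil to descend to $D_m$, i.e.\ to contain sections vanishing at each of the $m$ general pairs $(x_k,y_k)$, imposes at least $m$ further conditions, so the bad locus inside $\mathcal{Z}$ is empty. Without an argument of this kind the descent cannot be started, since Proposition \ref{induction-step} explicitly requires $h^0(C,\mu^*L_{\alpha})=2$ at every stage.
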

\begin{proof}
    This result is trivial unless $i\leq \lfloor (g-2)/2\rfloor$ by Green's conjecture in positive characteristic. Fix such an $i$ and set $m=g-2i-2$. Let $x_k, y_k\in C$ be general points for $1\leq k\leq m$. 

    For any $1 \leq j \leq m$, we let $D_j$ be the $j$-nodal curve obtained by identifying $x_i$ to $y_k$ for $1 \leq i \leq j$. We let $p_i \in D_j$ denote the node over $x_k, y_k$ for $1 \leq i \leq j$. Set $D_0:=C$. We let
$$\mu_j \; : \; D_{j-1} \to D_j$$
be the partial normalization map at the node $p_j$. By Proposition \ref{induction-step} it suffices to show that $\mathrm{K}_{i+m,1}(D_m,\omega_{D_m})$ is spanned by syzygies of minimal rank $i+m+1$ such that $D_m$ lies in the smooth locus of the syzygy scheme $X_{\alpha}:=\mathrm{Syz}(\alpha)$ and such that $\deg(L_{\alpha})=g-i$, $h^0(C,\mu^*L_{\alpha})=2$, where $\mu: C \to D_m$ is the normalization map (note that this implies that all pullbacks of $L_{\alpha}$ along partial-normalizations at any number of nodes have two sections). 

Set $\ell=g-i-1$, then $D_m$ has arithmetic genus $2\ell=2g-2i-2$. By Proposition \ref{prop-nodal-global-injection}, 
$$S^* \; : \; H^0(\mathcal{O}_{\PP}(1)) \to H^0(S^*\mathcal{O}_{\PP}(1)),$$
with $\PP:=\PP(\mathrm{K}_{\ell-1,1}(D_{m},\omega_{D_m}))$, is injective since $m=g-2i-2 \leq \ell-1$, and $\mathrm{char}(\bbF)=p>2g-2i-2=2\ell$. Note that $W^1_{\ell+1}(D_m)$ is zero-dimensional and reduced, and each element $[A]\in W^1_{\ell+1}(D_m)$ is a line bundle with exactly two sections by Proposition  \ref{gen-node-BN}. The morphism $S$ is the map
By Proposition \ref{prop-nodal-global-injection}, the image of $S$ does not lie in any hyperplane and hence the image $S(X^1_{\ell+1}(D_m))$ spans $\PP(\mathrm{K}_{\ell-1,1}(D_m,\omega_{D_m}))$. 

Thus $\mathrm{K}_{\ell-1,1}(D_m,\omega_{D_m})$ is spanned by syzygies $\alpha$ which lie in $\mathrm{K}_{\ell-1,1}(D_m,\omega_{D_m}; H^0(\omega_{D_m} \otimes A^{-1})$ for $s \in H^0(A)$, $A \in W^1_{\ell+1}(D_m)$. By Proposition \ref{last-ruling-prop}, such syzygies $\alpha$ are of minimal rank, $D_m$ lies in the smooth locus of the syzygy scheme $X_{\alpha}$, and the associated ruling is the line bundle $A \in W^1_{\ell+1}(D_m)$.

It remains to show that, for any $A \in W^1_{\ell+1}(D_m)$, we have $h^0(C,\mu^*A)=2$. Assume that $h^0(C,\mu^*A)\geq 3$ for some $A \in W^1_{\ell+1}(D_m)$. Notice that, for each $1 \leq i \leq m$, there exists $s_i \in \mu^*H^0(D_m,A)$ with $$s(x_k)=s(y_k)=0.$$ Consider the space $G^1_{\ell+1}(C)$ of $g^1_{\ell+1}$'s, i.e.\ pairs $V \subseteq H^0(C,L)$, where $L \in W^1_{\ell+1}(C)$ and $V$ is a two dimensional, base-point free subspace of the space of global sections of $L$. As $C$ is general, $G^1_{\ell+1}(C)$ is smooth of dimension $\rho(g,1,\ell+1)=m$, \cite[XXI, Prop.\ 6.8]{ACGH2}. 

If $m=0$ then $D_m=C$ and there is nothing to prove, so assume $m>0$, in which case $G^1_{\ell+1}(C)$ is further irreducible, \cite{fulton-laz-connectedness}. Let $\mathcal{Z} \subseteq G^1_{\ell+1}(C)$ be the closed subscheme of pairs $V \subseteq H^0(C,L)$ with $L \in W^2_{\ell+1}(C)$. Then $\mathcal{Z}$ has codimension at least one in $G^1_{\ell+1}(C)$, \cite[IV, Lemma 3.5]{ACGH2}. On the other hand, if $(x_k,y_k)$ are general for $1 \leq k \leq m$, then the locus $\mathcal{T}$ of pairs $[V \subseteq H^0(C,L)] \in \mathcal{Z}$ satisfying the condition 
$$\text{there exist $s_1, \ldots, s_m \in V$ with $s_j(x_k)=s_j(y_k)=0$ for $1 \leq j \leq m$}$$ has codimension at least $m$. Thus $\mathcal{T}=\varnothing$ as required.

To get the characteristic bound in Theorem \ref{thm-main} for all the linear syzygy space $\mathrm{K}_{i,1}(C,\omega_C)$, we simply put $i=1$, then $2g-2i-2=2g-4$.
\end{proof}

{\parskip=12pt 

\end{document}